\author{Torgeir Aamb\o}
\title{Algebraicity in monochromatic homotopy theory}
\date{}
\begin{document}
\maketitle

\begin{abstract}
    Using Patchkoria--Pstr{\k a}gowski's version of Franke's algebraicity theorem, we prove that the category of $K_p(n)$-local spectra is exotically equivalent to the category of derived $I_n$-complete periodic comodules over the Adams Hopf algebroid $(E_*, E_*E)$ for large primes. This gives a finite prime result analogous to the asymptotic algebraicity for $\spK$ of Barthel--Schlank--Stapleton. 
\end{abstract}

%% spellcheck-language

\tableofcontents

\section{Introduction}
\label{sec:introduction}

The central idea in chromatic homotopy theory is to study the symmetric monoidal stable $\infty$-category of spectra, $\sp$, via its smaller building blocks. These are the categories $\sp\np$ and $\spK$ of $E\np$-local and $K_p(n)$-local spectra, where $E=E\np$ is Morava $E$-theory, and $K_p(n)$ is Morava $K$-theory, see for example \cite{hovey-strickland_99}. These categories depend on a prime $p$ and an integer $n$, called the height. For a fixed height $n$, increasing the prime $p$ makes both categories behave more algebraically. This manifests itself, for example, in the $E$-Adams spectral sequence of signature
$$E^{s,t}_2 (L_n\S) = \Ext_{E_*E}^{s,t}(E_*,E_*)\Longrightarrow \pi_{t-s} L_n\S$$
computing the homotopy groups of the $E$-local sphere. By the smash product theorem of Ravenel, see \cite[7.5.6]{ravenel_92}, this spectral sequence has a horizontal vanishing line at a finite page. If $p>n+1$, this vanishing line appears already on the second page, where the information is completely described by the homological algebra of $\comod\EE$---the Grothendieck abelian category of comodules over the Hopf algebroid $(E_*, E_*E)$. 

Increasing the prime $p$ correspondingly increases the distance between objects appearing in the $E$-Adams spectral sequence. When $2p-2$ exceeds $n^2+n$, there is no longer room for any differentials, and the  spectral sequence in fact collapses to an isomorphism
$$\pi_* L_n\S\cong \Ext^{*,*}_{\EE}(E_*, E_*),$$
for degree reasons. In other words, the homotopy groups are completely algebraic in this range. 

A natural question to ask is whether this collapse is a feature solely of the $E$-Adams spectral sequence or if it is a feature of the category $\sp\np$. More precisely, is the entire category of $E$-local spectra algebraic, in the sense that it is equivalent to a derived category of an abelian category, whenever $2p-2>n^2+n$? 

At height $n=0$, the category $\sp\np$ is the category of rational spectra $\sp_\Q$, which can be seen to be equivalent to the derived $\infty$-category of rational vector spaces, but at positive heights $n>0$, there can never be an equivalence of $\infty$-categories $\sp\np \simeq D(\A)$. 

However, in \cite{bousfield_1985} Bousfield showed that for $p>2$ and $n=1$, that there is an equivalence of homotopy categories
$$h \sp_{1,p}\simeq h \Fr_{1,p},$$
where $\Fr\np$ is a certain derived $\infty$-category of twisted comodules over $(E_*, E_*E)$. As this cannot be lifted to an equivalence of $\infty$-categories, it is sometimes referred to as an \emph{exotic} equivalence. 

Franke expanded upon this in \cite{franke_96} by conjecturing---and attempting to prove---that for $2p-2 > n^2+n$ there should be an equivalence of homotopy categories
$$h \sp\np\simeq h \Fr\np.$$
Unfortunately, a subtle error was discovered in the proof by Patchkoria in \cite{patchkoria_2013}, but the result was recovered in \cite{pstragowski_2021} with a slightly worse bound: $2p-2>2n^2+2n$. Pstr{\k a}gowski also proved that this equivalence gets ``stronger'' the larger the prime, where we not only get an equivalence of categories but an equivalence of $k$-categories 
$$h_k \sp\np\simeq h_k \Fr\np,$$
for $k=2p-2-n^2-n$. Here $h_k \C$ denotes taking the homotopy $k$-category, given by $(k-1)$-truncating the mapping spaces in $\C$. At $k=1$, this gives the classical situation of taking the homotopy category $h\C$. Using and developing a more general machinery, Pstr{\k a}gowski and Patchkoria proved in \cite{patchkoria-pstragowski_2021} that the above equivalence holds in Franke's conjectured bound, $2p-2>n^2+n$.

These results imply that increasing the prime $p$ decreases how destructive the $k$-truncation of the mapping spaces needs to be. In the limit $p\rightarrow \infty$, we might expect that there is no need to truncate at all, giving an equivalence of $\infty$-categories. But, there needs to be an appropriate notion of what ``going to the infinite prime'' should be. In \cite{barthel-schlank-stapleton_2020}, the authors use a notion of ultraproducts over a non-principal ultrafilter $\mathcal{F}$ of primes to formalize this limiting process. They use this to prove the existence of a symmetric monoidal equivalence of $\infty$-categories
$$\prod_{\mathcal{F}}\sp\np \simeq \prod_{\mathcal{F}}\Fr\np.$$ 
Expanding on their work, Barthel, Schlank, and Stapleton proved in \cite{barthel-schlank-stapleton_2021} a $K_p(n)$-local version of the above result. More precisely, they show that there is a symmetric monoidal equivalence of $\infty$-categories
$$\prod_{\mathcal{F}}\spK \simeq \prod_{\mathcal{F}}\Frc\np,$$
where the right-hand side consists of derived complete twisted comodules for the naturally occurring Landweber ideal $I_n\subseteq E_*$.

\subsection*{Statement of results}

We can summarize the most general of the above algebraicity results in the following table,
\begin{table}[h]
    \centering
    \begin{tabular}{c|ccc}
        & $p<\infty $ & $p\rightarrow \infty$ \\
        \hline 
        $\sp\np$& \cite{patchkoria-pstragowski_2021} & \cite{barthel-schlank-stapleton_2020} \\
        $\spK$ &  & \cite{barthel-schlank-stapleton_2021} 
    \end{tabular}
\end{table}

A natural question arises: Is there a finite prime exotic algebraicity for $\spK$? The goal of this paper is to give an affirmative answer. More precisely, we prove the following. 

\begin{introthm}[\cref{thm:main-spectra-dual}]
    \label{thm:A}
    Let $p$ be a prime and $n\in \N$. If $k=2p-2-n^2-n>0$,  then there is an equivalence of $k$-categories 
    $$h_k \spK\simeq h_k \Frc\np.$$
    In other words, $K_p(n)$-local spectra are exotically algebraic at large primes. 
\end{introthm}

The available tools for proving such a statement require an abelian category with enough injective objects admitting lifts to a stable $\infty$-category. In lack of such a well-behaved abelian approximation for $\spK$, we take inspiration from \cite{barthel-schlank-stapleton_2021} and instead use the dual category $\M\np$ of monochromatic spectra, which we show has the needed properties. \cref{thm:A} will then follow from the following result. 

\begin{introthm}[\cref{thm:main-spectra}]
    \label{thm:B}
    Let $p$ be a prime and $n\in \N$. If $k=2p-2-n^2-n>0$,  then there is an equivalence of $k$-categories $h_k \M\np\simeq h_k \Fr\np^{I_n-tors}.$
\end{introthm}

In order to prove \cref{thm:B}, we first prove the analogous statement for monochromatic $E$-modules. 
    
\begin{introthm}[\cref{thm:main-modules}]
    \label{thm:C}
    Let $p$ be a prime and $n\in \N$. If $k=2p-2-n>0$,  then there is an equivalence of $k$-categories $h_k \Modt\simeq h_k D^{per}(\modt).$
\end{introthm}

\subsection*{Overview of the paper}

\cref{sec:introduction} introduces local duality, and the proposed exotic algebraic model using periodic chain complexes of torsion comodules. \cref{sec:exotic-algebraic-models} focuses on Franke's algebraicity theorem. Most of the new results of the paper are presented in \cref{ssec:algebraicity-modules} and \cref{ssec:algebraicity-spectra}, where we prove \cref{thm:A}, \cref{thm:B} and \cref{thm:C}. In \cref{app:barr-beck} we prove that Barr-Beck adjunctions interact well with local duality, which is used to prove that periodization, torsion and taking the derived category all commute.

\textbf{Acknowledgements.} We want to thank Drew Heard, Irakli Patchkoria and Marius Nielsen for helpful conversations and for proof-reading the paper. We also want to thank Piotr Pstr\a{}gowski for finding a mistake in the proof of a previous version of \cref{lm:cohomological-dimension-torsion-comodules}. Lastly, we want to thank the University of Copenhagen for their hospitality while writing most of this paper. This work forms a part of the authors thesis, partially supported by grant number TMS2020TMT02 from the Trond Mohn Foundation. 

\section{The algebraic model}

The goal of this section is to set up the necessary background material that will be used throughout the paper. We use these to construct convenient algebraic approxomations of categories arising from chromatic homotopy theory. 

\subsection*{Some conventions}

We freely use the language of $\infty$-categories, as developed by Joyal \cite{joyal_02} and Lurie \cite{lurie_09, Lurie_HA}. Even though we are dealing with both classical $1$-categories and $\infty$-categories in this paper, we will sometimes refer to them both as \emph{categories}, hoping that the prefix is clear from the context. 

We denote by {\defn $\Pr$} the $\infty$-category of presentable stable $\infty$-categories and colimit preserving functors. Together with the Lurie tensor product, it is a symmetric monoidal $\infty$-category. The category of algebras $\Alg(\Pr)$ is then the category of presentable stable $\infty$-categories with a symmetric monoidal structure commuting with colimits separately in each variable. 

Let $\C, \D\in \Alg(\Pr)$. A localization is a functor $f\colon \C\to \D$ with a fully faithful right adjoint $i$. We denote the composite by $L= i\circ f$. The adjoint $i$ identifies $\D$ with a full subcategory of $\C$, which we denote by $\C_L$. We then view $L$ as a functor $L\colon \C\to\C_L$, that is left adjoint to the inclusion, and by abuse of notation also call these localizations.

%%%%%%%%%%%%%%%%%%%%%%%%%%%%%%%%%%%%%%%%%%%%%%%%%%%%%%%%%%%%%%%%%%%%%%%%%%%%%%

\subsection{Local duality}

The theory of abstract local duality, proved in \cite{hovey-palmiery-strickland_97} and generalized to the $\infty$-categorical setting in \cite{barthel-heard-valenzuela_2018} will be important for the entire paper. In particular, it is the technology that will allow us to translate \cref{thm:B} into \cref{thm:A}. 

\begin{definition}
    \label{def_local-duality-context}
    A pair $(\C, \K)$, where $\C\in \Alg(\Pr)$ is compactly generated by dualizables, and $\K$ is a subset of compact objects, is called a {\defn local duality context}.
\end{definition}

\begin{construction}
    Let $(\C, \K)$ be a local duality context. We define {\defn $\C^{\K-tors}$} to be the localizing tensor ideal generated by $\K$, denoted $\Loc^\otimes_\C(\K)$. Further we define {\defn $\C^{\K-loc}$} to be the left orthogonal complement $(\C^{\K-tors})^\perp$, i.e., the full subcategory consisting of objects $C\in \C$ such that $\Hom_\C(T,C)\simeq 0$ for all $T\in \C^{\K-tors}$. Similarily we define {\defn $\C^{\K-comp}$} to be the double left-orthogonal complement $(\C^{\K-loc})^\perp$. These full subcategories are respectively called the $\K$-torsion, $\K$-local and $\K$-complete objects in $\C$. We have inclusions into $\C$, denoted {\defn $i_{\K-tors}$}, {\defn $i_{\K-loc}$} and {\defn $i_{\K-comp}$} respectively. When the $\K$ is understood, we sometimes omit it from the notation. 
    
    By the adjoint functor theorem, \cite[5.5.2.9]{lurie_09}, the inclusions $i_{\K-loc}$ and $i_{\K-comp}$ have left adjoints {\defn $L_\K$} and {\defn $\Lambda_\K$} respectively, while $i_{\K-tors}$ and $i_{\K-loc}$ have right adjoints {\defn $\Gamma_\K$} and {\defn $V_\K$} respectively. These are then, by definition, localizations and colocalizations. Since the torsion, local and complete objects are ideals, these localizations and colocalizations are compatible with the symmetric monoidal structure of $\C$, in the sense of \cite[2.2.1.7]{Lurie_HA}. In particular, by \cite[2.2.1.9]{Lurie_HA} we get unique induced symmetric monoidal structures such that $\Gamma$, $L$ and $\Lambda$ are symmetric monoidal functors. 

    For any $X\in \C$, these functors assemble into two cofiber sequences:
    $$\Gamma_\K X \longrightarrow X \longrightarrow L_\K X \quad \text{and}\quad V_\K X \longrightarrow X \longrightarrow \Lambda_\K X.$$
    Note also that these functors only depend on the localizing subcategory $\C^{\K-tors}$, not on the particular choice of generators $\K$. Thus, when the set $\K$ is clear from the context, we often omit it as a subscript when writing the functors. 
\end{construction}

The following theorem is a slightly restricted version of the abstract local duality theorem of \cite[3.3.5]{hovey-palmiery-strickland_97} and \cite[2.21]{barthel-heard-valenzuela_2018}.  

\begin{theorem}
    \label{thm:local-duality}
    Let $(\C, \K)$ be a local duality context. Then
    \begin{enumerate}
        \item the functors $\Gamma$ and $L$ are smashing, i.e. $\Gamma X\simeq X\otimes \Gamma \1$ and $LX\simeq X\otimes L\1$,
        \item the functors $\Lambda$ and $V$ are cosmashing, i.e. $\Lambda X\simeq \iHom(\Gamma \1,X)$ and $VX\simeq \iHom(L\1, x)$, and 
        \item the functors $\Gamma\colon \C^{\K-comp}\longrightarrow \C^{\K-tors}$ and $\Lambda\colon \C^{\K-tors}\longrightarrow \C^{\K-comp}$ are mutually inverse symmetric monoidal equivalences of categories,
    \end{enumerate}
    This can be summarized by the following diagram of adjoints
    \begin{center}
        \begin{tikzcd}
                & {\C^{\K-loc}} \\
                & {\C} \\
                {\C^{\K-tors}} && {\C^{\K-comp}}
                \arrow["L", xshift=-2pt, from=2-2, to=1-2]
                \arrow[xshift=2pt, from=1-2, to=2-2]
                \arrow["\Lambda", yshift=2pt, xshift=2pt, from=2-2, to=3-3]
                \arrow[yshift=-2pt, xshift=-1pt, from=3-3, to=2-2]
                \arrow["\Gamma", yshift=-2pt, xshift=2pt, from=2-2, to=3-1]
                \arrow[yshift=2pt, xshift=-1pt, from=3-1, to=2-2]
                \arrow[bend left=35, dashed, from=3-1, to=1-2]
                \arrow[bend left=35, dashed, from=1-2, to=3-3]
                \arrow["\simeq"', swap, from=3-1, to=3-3]
        \end{tikzcd}    
    \end{center}
\end{theorem}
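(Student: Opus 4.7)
The plan is to deduce all three statements from the single structural input that $\C^{\K-tors} = \Loc^\otimes_\C(\K)$ is a localizing tensor ideal, hence stable under $-\otimes Y$ for every $Y \in \C$. Once this is in hand, (2) and (3) follow formally from (1) via standard adjunction bookkeeping.

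For (1), tensor the cofiber sequence $\Gamma \1 \to \1 \to L\1$ with $X$ to obtain $\Gamma\1 \otimes X \to X \to L\1 \otimes X$. The left term lies in $\C^{\K-tors}$ by the ideal property; once the right term is shown to be $\K$-local, uniqueness of the torsion--local cofiber sequence forces the identifications $\Gamma\1 \otimes X \simeq \Gamma X$ and $L\1 \otimes X \simeq LX$. Locality is detected on the generators $T \in \K$, which are dualizable by hypothesis, so
$$\Map_\C(T, L\1 \otimes X) \simeq \Map_\C(T \otimes L\1, X) \simeq \Map_\C(LT, X) \simeq 0,$$
since $T$ is torsion forces $LT \simeq 0$.

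For (2), observe that $\iHom(\Gamma\1, -)$ is right adjoint to $-\otimes \Gamma\1 \simeq \Gamma$ by (1). I would show its target is always $\K$-complete and that the canonical map $X \to \iHom(\Gamma\1, X)$ is a $\K$-equivalence; the universal property of $\Lambda$ then yields $\Lambda X \simeq \iHom(\Gamma\1, X)$. Applying $\iHom(-, X)$ to $\Gamma\1 \to \1 \to L\1$ identifies the fiber of this evaluation with $\iHom(L\1, X)$, which is local by the same tensor--hom manipulation as in (1) together with $LT\simeq 0$ for torsion $T$. The statement $VX \simeq \iHom(L\1, X)$ follows by the symmetric argument with the roles of torsion and local exchanged.

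For (3), combining the two cofiber sequences gives $\Gamma V \simeq 0$ and $\Lambda L \simeq 0$, so $\Gamma\Lambda \simeq \Gamma$ and $\Lambda\Gamma \simeq \Lambda$. Restricted to the torsion (respectively, complete) subcategory, these composites are the identity, producing the asserted inverse equivalences $\Gamma \colon \C^{\K-comp} \rightleftarrows \C^{\K-tors} \colon \Lambda$. The symmetric monoidal structures inherited via $X \otimes^\tau Y := \Gamma(X \otimes Y)$ on torsion and $X \otimes^c Y := \Lambda(X \otimes Y)$ on complete objects are matched by these equivalences by construction, with the existence of the structures guaranteed by \cite[2.2.1.9]{Lurie_HA}. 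The main obstacle I anticipate is step (1): promoting $\Map_\C(T, L\1 \otimes X) \simeq 0$ from the compact generators to arbitrary torsion test objects requires both the dualizability inherent in the ``compactly generated by dualizables'' hypothesis and a standard extension argument using stability of the vanishing class under colimits.
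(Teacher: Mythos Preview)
The paper does not give its own proof of this theorem; it is quoted from \cite[3.3.5]{hovey-palmiery-strickland_97} and \cite[2.21]{barthel-heard-valenzuela_2018} as background. Your overall strategy---establish the smashing property of $\Gamma$ and $L$ first and then deduce (2) and (3) by formal adjunction manipulations---is the standard one used in those references, and your treatment of (2) and (3) is correct once (1) is available.

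There is, however, a genuine gap in your argument for (1). The displayed equivalence $\Map_\C(T, L\1 \otimes X) \simeq \Map_\C(T \otimes L\1, X)$ is not what dualizability of $T$ provides: dualizability gives $\Map_\C(T, Y) \simeq \Map_\C(\1, T^\vee \otimes Y)$, not a swap of a tensor factor across the comma. Worse, the subsequent identification $T \otimes L\1 \simeq LT$ is precisely the smashing statement you are trying to prove, so the chain is circular as written. A correct repair: for $T \in \K$ dualizable one has $\iHom(T, L\1) \simeq T^\vee \otimes L\1$, and this object vanishes because for every compact $C$ we have $\Map_\C(C, \iHom(T, L\1)) \simeq \Map_\C(C \otimes T, L\1) \simeq 0$, since $C \otimes T$ lies in the tensor ideal $\C^{\K-tors}$ while $L\1$ is local. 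It follows that $\iHom(T, L\1 \otimes X) \simeq T^\vee \otimes L\1 \otimes X \simeq 0$, and in particular $\Map_\C(T, L\1 \otimes X) \simeq 0$; the extension from $T \in \K$ to arbitrary torsion test objects is then the colimit argument you already anticipated.
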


\begin{remark}
    \label{rm:monoidal-structure-in-local-duality}
    This implies, in particular, that the symmetric monoidal structure induced by the localization $L$ and the colocalization $\Gamma$ is just the symmetric monoidal structure on $\C$ restricted to the full subcategories. This is not the case for $\C^{\K-comp}$, where the symmetric monoidal structure is given by $\Lambda_\K(-\otimes_\C-)$. 
\end{remark}

We have two main examples of interest for this paper. 

\begin{example}
    \label{ex:local-duality-comod}
    Let $(A,\Psi)$ be an Adams type Hopf algebroid, for example the Hopf algebroid $(R_*, R_*R)$ for an Adams type ring spectrum $R$---see \cite[A.1]{ravenel_86} and \cite{hovey_04} for details. Denote by $D(\Psi)$ the derived $\infty$-category associated to the symmetric monoidal Grothendieck abelian category $\Comod_\Psi$. This is defined using the model structure from \cite{barnes-roitzheim_2011}. If $I\subseteq A$ is a finitely generated invariant regular ideal, then $(D(\Psi), A/I)$ is a local duality context, with associated local duality diagram
    \begin{center}
        \begin{tikzcd}
            & {D(\Psi)^{I-loc}} \\
            & {D(\Psi)} \\
            {D(\Psi)^{I-tors}} && {D(\Psi)^{I-comp}}
            \arrow["L_I^\Psi", xshift=-2pt, from=2-2, to=1-2]
            \arrow[xshift=2pt, from=1-2, to=2-2]
            \arrow["\Delta_I^\Psi", yshift=2pt, xshift=2pt, from=2-2, to=3-3]
            \arrow[yshift=-2pt, xshift=-1pt, from=3-3, to=2-2]
            \arrow["\Gamma^\Psi_I", yshift=-2pt, xshift=2pt, from=2-2, to=3-1]
            \arrow[yshift=2pt, xshift=-1pt, from=3-1, to=2-2]
            \arrow[bend left=35, dashed, from=3-1, to=1-2]
            \arrow[bend left=35, dashed, from=1-2, to=3-3]
            \arrow["\simeq"', swap, from=3-1, to=3-3]
        \end{tikzcd}    
    \end{center}
\end{example}

In \cref{ssec:the-algebraic-model} we compare $D(\Psi)^{I-tors}$ to a more concrete category: the derived category of $I$-power torsion comodules.  

The following example comes from chromatic homotopy theory. For a good introduction, see \cite{barthel-beaudry_19}. 

\begin{example}
    \label{ex:local-duality-chromatic}
    Let $E$ denote Morava $E$-theory at prime $p$ and height $n$. If $F(n)$ is a finite type $n$ spectrum, then the pair $(\sp\np, L_n F(n) )$ is a local duality context. The corresponding diagram can be recognized as
    \begin{center}
        \begin{tikzcd}
                & {\sp_{n-1,p}} \\
                & {\sp\np} \\
                {\M\np} && {\spK}
                \arrow["L_{n-1}", xshift=-2pt, from=2-2, to=1-2]
                \arrow[xshift=2pt, from=1-2, to=2-2]
                \arrow["L_{K_p(n)}", yshift=2pt, xshift=2pt, from=2-2, to=3-3]
                \arrow[yshift=-2pt, xshift=-1pt, from=3-3, to=2-2]
                \arrow["M\np", yshift=-2pt, xshift=2pt, from=2-2, to=3-1]
                \arrow[yshift=2pt, xshift=-1pt, from=3-1, to=2-2]
                \arrow[bend left=35, dashed, from=3-1, to=1-2]
                \arrow[bend left=35, dashed, from=1-2, to=3-3]
                \arrow["\simeq"', swap, from=3-1, to=3-3]
        \end{tikzcd}    
    \end{center}
    where $\M\np$ is the height $n$ monochromatic category and $\spK$ is the category of spectra localized at height $n$ Morava $K$-theory $K_p(n)$. The functor $L_{n-1}$ is the Bousfield localization at $E_{n-1}$, while $L_{K_p(n)}$ is the Bousfield localization at $K_p(n)$, see \cite{bousfield_1979_localization}. The local duality then exhibits the classical equivalence $\M\np\simeq \spK$, see \cite[6.19]{hovey-strickland_99}. 
\end{example}

\begin{remark}
    \label{rm:local-duality-modules}
    There is also a version of this local duality diagram for modules over $E$. This gives equivalences 
    \[\M\np\Mod_E\simeq \Mod_E^{I_n-tors}\simeq \Mod_E^{I_n-comp}\simeq L_{K_p(n)}\Mod_E,\]
    where $I_n$ is the Landweber ideal $(p,v_1, \ldots, v_{n-1})\subseteq E_*$.
\end{remark}

%%%%%%%%%%%%%%%%%%%%%%%%%%%%%%%%%%%%%%%%%%%%%%%%%%%%%%%%%%%%%%%%%%%%%%%%%%%%%%

\subsection{The periodic derived torsion category}
\label{ssec:the-algebraic-model}

In this section we identify the category $D(\Psi)^{I-tors}$---as obtained in \cref{ex:local-duality-comod}---as the derived category of $I$-power torsion comodules. We also modify the category to exhibit some needed periodicity. 

\begin{definition}
    \label{def:I-power-torsion-comodule}
    Let $(A,\Psi)$ be an Adams Hopf algebroid and $I\subseteq A$ a regular invariant ideal. The $I$-power torsion of a comodule $M$ is defined as 
    $$T_I^\Psi M = \{x\in M \mid I^kx = 0 \text{ for some } k\in \N\}.$$
    We say a comodule $M$ is {\defn $I$-torsion} if the natural map $T_I^\Psi M\longrightarrow M$ is an equivalence. 
\end{definition}

\begin{remark}
    \label{rm:torsion-iff-underlying-is-torsion}
    One can similarily define $I$-power torsion $A$-modules. If $(A,\Psi)$ is an Adams Hopf algebroid, then a $\Psi$-comodule $M$ is $I$-power torsion if and only if its underlying module is $I$-power torsion, see \cite[5.7]{barthel-heard-valenzuela_2018}. 
\end{remark}

\begin{remark}
    \label{rm:torsion-comodules-grothendieck-monoidal}
    By \cite[5.10]{barthel-heard-valenzuela_2018} the full subcategory of $I$-torsion comodules, which we denote {\defn $\Comod_\Psi^{I-tors}$}, is a Grothendieck abelian category. It also inherits a symmetric monoidal structure from $\Comod_\Psi$. 
\end{remark}

The following technical lemma will be needed later. 

\begin{lemma}
    \label{lm:torsion-comodules-generated-by-compacts}
    Let $(A,\Psi)$ be an Adams Hopf algebroid, where $A$ is noetherian and $I\subseteq A$ a regular invariant ideal. Then $\Comod_\Psi^{I-tors}$ is generated under filtered colimits by the compact $I$-power torsion comodules. 
\end{lemma}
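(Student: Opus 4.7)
The plan is to prove this by exhibiting any $M \in \Comod_\Psi^{I-tors}$ as a filtered colimit of compact $I$-torsion subcomodules. First, I would invoke the standard fact for Adams Hopf algebroids (compare \cite{hovey_04}) that every $\Psi$-comodule is the filtered colimit of its finitely generated subcomodules: given any finite set of elements $x_1,\dots,x_m \in M$, one can use the coaction $\psi \colon M \to \Psi \otimes_A M$ together with the noetherian hypothesis on $A$ to build a finitely generated subcomodule containing them. Given $M\in \Comod_\Psi^{I-tors}$, choose such a presentation $M = \varinjlim_i M_i$; each $M_i$ is automatically $I$-power torsion by \cref{rm:torsion-iff-underlying-is-torsion}, since any element of $M_i$ is annihilated by some power of $I$ after inclusion into $M$.

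Next, I would verify two compatibility statements. First, each $M_i$ is compact in $\Comod_\Psi$: because $A$ is noetherian, the underlying $A$-module is finitely presented, and since the forgetful functor $\Comod_\Psi \to \Mod_A$ preserves filtered colimits (being left adjoint to $\Psi \otimes_A (-)$) and $\Psi$ is flat over $A$, the standard equalizer description of $\Hom_{\Comod_\Psi}(M_i,-)$ in terms of $\Hom_A$ expresses it as a finite limit of filtered-colimit-preserving functors, hence as filtered-colimit-preserving. Second, the inclusion $\Comod_\Psi^{I-tors} \hookrightarrow \Comod_\Psi$ preserves filtered colimits: it is left adjoint to the torsion functor $T_I^\Psi$ of \cref{def:I-power-torsion-comodule}, since any morphism from an $I$-torsion comodule into $N$ factors uniquely through $T_I^\Psi N$. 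Combining these, each $M_i$ is a compact object of $\Comod_\Psi^{I-tors}$, and the colimit $M = \varinjlim_i M_i$ is computed equally in either category, giving the required presentation.

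The main technical obstacle is the passage of compactness from $\Comod_\Psi$ to $\Comod_\Psi^{I-tors}$, which rests on the inclusion preserving filtered colimits and hence on identifying $T_I^\Psi$ as its right adjoint. The noetherian hypothesis is used crucially both to guarantee that finitely generated subcomodules exist as a cofinal filtration of any comodule, and to identify finitely generated underlying $A$-modules with finitely presented ones so that the equalizer argument for compactness in $\Comod_\Psi$ goes through.
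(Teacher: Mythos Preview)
Your argument is correct and takes a somewhat different route from the paper. The paper does not construct the generators by hand: it invokes \cite[3.4]{barthel-heard-valenzuela_2020} to obtain the explicit generating set $\mathrm{Tors}_\Psi^{fp} = \{G \otimes A/I^k \mid G \in \Comod_\Psi^{fp},\ k \geq 1\}$ for $\Comod_\Psi^{I-tors}$, and then verifies that each $G \otimes A/I^k$ is finitely presented (hence compact) using the noetherian hypothesis. You instead exhibit an arbitrary $I$-torsion comodule directly as the filtered union of its finitely generated subcomodules, each automatically $I$-torsion and compact. Both proofs share the same endgame---noetherian gives finitely generated $=$ finitely presented $=$ compact in $\Comod_\Psi$, and compactness passes to $\Comod_\Psi^{I-tors}$ because the inclusion preserves filtered colimits---so the difference lies only in how the generators are produced. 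Your version trades the external citation for the locally noetherian structure of $\Comod_\Psi$ (implicit in your appeal to \cite{hovey_04} plus the noetherian hypothesis), yielding a more self-contained argument at the cost of a less explicit generating set; the paper's version is quicker but leans on \cite{barthel-heard-valenzuela_2020} for the heavy lifting.
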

\begin{proof}
    By \cite[3.4]{barthel-heard-valenzuela_2020} $\Comod_\Psi^{I-tors}$ is generated by the set 
    $$\mathrm{Tors}_\Psi^{fp}:=\{G\otimes A/I^k \mid G \in \Comod_\Psi^{fp}, k\geq 1\},$$
    where $\Comod_\Psi^{fp}$ is the full subcategory of dualizable $\Psi$-comodules. Since $I$ is finitely generated and regular, $A/I^k$ is finitely presented as an $A$-module, hence it is compact in $\Comod_\Psi$ by \cite[1.4.2]{hovey_04}, and in $\Comod_\Psi^{I-tors}$ as colimits are computed in $\Comod_\Psi$. As $A$ is noetherian, being finitely generated and finitely presented coincide. The tensor product of finitely generated modules is finitely generated, hence any element in $\mathrm{Tors}_\Psi^{fp}$ is compact. 
\end{proof}

\begin{remark}
    The assumption that the ring $A$ is noetherian can most likely be removed, but it makes no difference to the results in this paper.  
\end{remark}

\begin{notation}
    Since $\Comod_\Psi^{I-tors}$ is Grothendieck abelian we have an associated derived stable $\infty$-category $D(\Comod_\Psi^{I-tors})$ which we denote simply by {\defn $D(\Psi^{I-tors})$}.
\end{notation}

We e can now compare the torsion category obtained from local duality and the derived category of $I$-power torsion comodules. 

\begin{lemma}[{\cite[3.7(2)]{barthel-heard-valenzuela_2020}}]
    \label{lm:derived-torsion-if-homology-torsion}
    Let $(A,\Psi)$ be an Adams Hopf algebroid and $I\subseteq A$ a regular invariant ideal. There is an equivalence of categories 
    $$D(\Psi)^{I-tors}\simeq D(\Psi^{I-tors}).$$ 
    Furthermore, an object $M\in D(\Psi)$ is $I$-torsion if and only if the homology groups $H_* M$ are $I$-power torsion $\Psi$-comodules.
\end{lemma}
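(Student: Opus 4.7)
The plan is to produce the equivalence by deriving the fully faithful, exact, colimit-preserving inclusion of Grothendieck abelian categories $\iota\colon \Comod_\Psi^{I-tors}\hookrightarrow \Comod_\Psi$. This extends to a cocontinuous functor of stable presentable $\infty$-categories $\iota_\ast \colon D(\Psi^{I-tors}) \to D(\Psi)$, and the task is to check that it factors through the full subcategory $D(\Psi)^{I-tors}$ and defines an equivalence onto it.

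First I would verify that the image of $\iota_\ast$ lands in $D(\Psi)^{I-tors}$. By \cref{lm:torsion-comodules-generated-by-compacts} the source is generated under filtered colimits by compact torsion comodules of the form $G\otimes A/I^k$, and each such object admits a finite filtration whose subquotients are $A/I$-modules. Hence each generator lies in the localizing tensor ideal $\Loc^\otimes(A/I) = D(\Psi)^{I-tors}$, and cocontinuity of $\iota_\ast$ propagates this to the whole category. Essential surjectivity then follows because $A/I$, the generator of $D(\Psi)^{I-tors}$ as a localizing tensor ideal, manifestly lies in the image. Fully faithfulness reduces to checking that the natural map $\Ext^\ast_{\Comod_\Psi^{I-tors}}(M,N) \to \Ext^\ast_{\Comod_\Psi}(M,N)$ is an isomorphism for torsion comodules $M,N$; this is standard for a \emph{hereditary} localizing Serre subcategory, where injective envelopes of torsion objects remain injective in the ambient category.

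For the second claim, the forward direction is immediate from the equivalence: if $M\in D(\Psi)^{I-tors}$, then $M\simeq \iota_\ast N$, and the homology of $N$ computed in $\Comod_\Psi^{I-tors}$ maps to $H_\ast M$ under $\iota$, so $H_\ast M$ is $I$-power torsion. Conversely, given $M\in D(\Psi)$ with $I$-power torsion homology, each truncation $\tau_{[a,b]} M$ is built from finitely many (shifted) torsion comodules via iterated extensions and therefore lies in $D(\Psi)^{I-tors}$; passing to the colimit over connective truncations and the limit over coconnective ones, and using that $D(\Psi)^{I-tors}$ is closed under the relevant (co)limits, recovers $M$ inside $D(\Psi)^{I-tors}$.

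The main obstacle is the $\Ext$-comparison in the fully faithful step: one needs to know that injectives in $\Comod_\Psi^{I-tors}$ remain injective after applying $\iota$. This is precisely the hereditary property of the torsion theory generated by a regular invariant ideal, and is the technical heart of the proof; everything else follows formally from generation and cocontinuity.
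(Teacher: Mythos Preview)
The paper does not prove this lemma; it is quoted verbatim from \cite[3.7(2)]{barthel-heard-valenzuela_2020} and no argument is supplied. Your outline is essentially the standard proof one finds there: derive the exact inclusion $\iota$, identify the hereditary property of the $I$-power torsion class (injectives of $\Comod_\Psi^{I-tors}$ remain injective in $\Comod_\Psi$) as the input needed for the $\Ext$-comparison and hence full faithfulness, and recover the essential image from the generator $A/I$. So there is nothing in the paper to compare against, but your approach is the right one and correctly isolates the technical core.

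One step deserves tightening. In the converse direction of the homology criterion you appeal to closure of $D(\Psi)^{I-tors}$ under a limit of coconnective truncations. Localizing subcategories are closed under colimits but not under limits in general, so this move is not free. A safer route---powered by the very same hereditary property---is the local cohomology spectral sequence
\[
E_2^{s,t} = (R^{-s}T_I)(H_t M) \Longrightarrow H_{s+t}(\Gamma_I M):
\]
when every $H_t M$ is $I$-power torsion, hereditariness lets you resolve $H_t M$ by torsion injectives inside $\Comod_\Psi$, forcing $R^{>0}T_I(H_t M)=0$; the spectral sequence then collapses and the counit $\Gamma_I M \to M$ is an equivalence, so $M\in D(\Psi)^{I-tors}$. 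Alternatively, use only the colimit $M \simeq \colim_n \tau_{\geq -n} M$ (valid in the derived category of any Grothendieck abelian category) and note that each bounded-below truncation with torsion homology is quasi-isomorphic to a bounded-below complex of torsion comodules, hence lies in the essential image of $\iota_*$. Either fix avoids the unjustified limit.

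A minor point on essential surjectivity: since $D(\Psi)^{I-tors}$ is the localizing \emph{tensor} ideal on $A/I$, you should check that the essential image of $\iota_*$ is a tensor ideal, not merely a localizing subcategory. This follows because $D(\Psi)$ is generated by dualizable comodules $P$, and each $A/I\otimes P$ is a finitely presented torsion comodule, hence already in the image.
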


In order to state both the general algebraicity matchinery of \cite{patchkoria-pstragowski_2021} and our results, we need the respective derived categories to exhibit the periodic nature of the spectra we are interested in. This is done via the periodic derived category. There are several ways to constructing this, but we follow \cite{franke_96} in spirit, using periodic chain complexes. 

\begin{definition}
    \label{def:periodic-chain-complex}
    Let $\A$ be an abelian category with a local grading, i.e., an autoequivalence $T\colon \A\to\A$, and denote $[1]$ the shift functor on the category of chain complexes $\Ch(\A)$ in $\A$. A chain complex $C\in \Ch(\A)$ is called {\defn periodic} if there is an isomorphism $\phi\colon C[1]\longrightarrow TC$. The full subcategory of periodic chain complexes is denoted by $\Ch^{per}(\A)$. 
\end{definition}

\begin{definition}
    The forgetful functor $\Ch^{per}(\A)\longrightarrow \Ch(\A)$ has a left adjoint $P$, called the {\defn periodization}. 
\end{definition}

\begin{definition}
    \label{def:periodic-derived-category}
    Let $\A$ be a locally graded abelian category. Then the {\defn periodic derived category} of $\A$, denoted {\defn $D^{per}(\A)$} is the $\infty$-category obtained by localizing $\Ch^{per}(\A)$ at the quasi-isomorphism. It is in fact stable by \cite[7.8]{patchkoria-pstragowski_2021}. 
\end{definition}

\begin{remark}
    \label{rm:periodic-derived-as-modules}
    If $\A$ is a symmetric monoidal category, then $P\1$ is a commutative ring object called the {\defn periodic unit}. The category of periodic chain complexes $\Ch^{per}(\A)$ is equivalent to $\Mod_{P\1}(\Ch(\A))$. This descends also to the derived categories, giving an equivalence 
    $$D^{per}(\A)\simeq \Mod_{P\1}(D(\A)),$$
    see for example \cite[3.7]{pstragowski_2021}. 
\end{remark}

We will also need local duality  for the periodic derived category associated to a Hopf algebroid. 

\begin{construction}
    \label{const:periodic-derived-local-duality}
    Let $(A, \Psi)$ be an Adams type (graded) Hopf algebroid. Then the shift functor $[1]\colon \Comod_\Psi\longrightarrow \Comod_\Psi$
    defined by $(TM)_k = M_{k-1}$ is a local grading on $\Comod_\Psi$. Denote the corresponding periodic derived category by $D^{per}(\Psi)$. The pair $(D^{per}(\Psi), P(A/I))$ is a local duality context with associated local duality diagram
    \begin{center}
    \begin{tikzcd}
        & {D^{per}(\Psi)^{I-loc}} \\
        & {D^{per}(\Psi)} \\
        {D^{per}(\Psi)^{I-tors}} && {D^{per}(\Psi)^{I-comp}}
        \arrow["L_I^\Psi", xshift=-2pt, from=2-2, to=1-2]
        \arrow[xshift=2pt, from=1-2, to=2-2]
        \arrow["\Lambda_I^\Psi", yshift=2pt, xshift=2pt, from=2-2, to=3-3]
        \arrow[yshift=-2pt, xshift=-1pt, from=3-3, to=2-2]
        \arrow["\Gamma^\Psi_I", yshift=-2pt, xshift=2pt, from=2-2, to=3-1]
        \arrow[yshift=2pt, xshift=-1pt, from=3-1, to=2-2]
        \arrow[bend left=35, dashed, from=3-1, to=1-2]
        \arrow[bend left=35, dashed, from=1-2, to=3-3]
        \arrow["\simeq"', swap, from=3-1, to=3-3]
    \end{tikzcd}    
    \end{center}
    The functors in the diagram are induced by the functors from \cref{ex:local-duality-comod}. In fact, there is a diagram 
    \begin{center}
        \begin{tikzcd}
            D(\Psi)^{I-tors} 
            \arrow[d, xshift=-2pt, "P", swap] 
            \arrow[r, yshift=2pt]       
            & D(\Psi) 
            \arrow[d, xshift=-2pt, "P", swap] 
            \arrow[r, yshift=2pt, "L_I^\Psi"]
            \arrow[l, yshift=-2pt, "\Gamma_I^\Psi"]       
            & D(\Psi)^{I-loc} 
            \arrow[d, xshift=-2pt, "P", swap] 
            \arrow[l, yshift=-2pt]      \\
            D^{per}(\Psi)^{I-tors} 
            \arrow[r, yshift=2pt]   
            \arrow[u, xshift=2pt] 
            & D^{per}(\Psi) 
            \arrow[r, yshift=2pt, "L_I^\Psi"]   
            \arrow[l, yshift=-2pt, "\Gamma_I^\Psi"] 
            \arrow[u, xshift=2pt] 
            & D^{per}(\Psi)^{I-loc} 
            \arrow[l, yshift=-2pt] 
            \arrow[u, xshift=2pt] 
        \end{tikzcd}    
    \end{center}
    that is commutative in all possible directions. Here the unmarked horizontal arrows are the respective fully faithful inclusions. 
\end{construction} 

\begin{remark}
    In the specific case of $(A, \Psi) = (E_0, E_0E)$ and $I\subseteq E_0$ the Landweber ideal $I_n$, then the above construction is \cite[3.12]{barthel-schlank-stapleton_2021}. 
\end{remark}

There is now some ambiguity to take care of for our category of interest $D^{per}(\Psi)^{I-tors}$. In the picture above, we do mean that we take $I$-torsion objects in $D^{per}(\Psi)$, i.e., $[D^{per}(\Psi)]^{I-tors}$, but we could also take the periodization of the category $D(\Psi^{I-tors})$ as our model. Luckily, there is no choice, as they are equivalent. This can be thought of as the periodic version of \cref{lm:derived-torsion-if-homology-torsion}.

\begin{theorem}
    \label{thm:pulling-out-torsion}
    Let $(A, \Psi)$ be an Adams Hopf algebroid and $I\subseteq A$ a finitely generated invariant regular ideal. Then there is an equivalence of stable $\infty$-categories 
    $$[D^{per}(\Psi)]^{I-tors}\simeq D^{per}(\Psi^{I-tors}).$$
\end{theorem}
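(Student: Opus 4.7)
The plan is to reduce the statement to a compatibility between the two Barr--Beck--style adjunctions in play: the periodization $P\dashv U$ from \cref{rm:periodic-derived-as-modules} and the torsion colocalization $\Gamma_I \dashv i$ from \cref{thm:local-duality}. Both $P\simeq P\1\otimes(-)$ and $\Gamma_I\simeq \Gamma_I\1\otimes(-)$ are smashing operations on $D(\Psi)$, hence commute up to canonical equivalence. The content of the theorem is to upgrade this tensor-product identity to an equivalence of the associated Eilenberg--Moore categories.

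First I would use \cref{rm:periodic-derived-as-modules} to write $D^{per}(\Psi)\simeq \Mod_{P\1}(D(\Psi))$, so that $I$-torsion periodic objects become $I$-torsion $P\1$-modules in $D(\Psi)$. Since $\Gamma_I$ is smashing by \cref{thm:local-duality}, it commutes with $P\1\otimes(-)$, which is precisely the commutativity of the leftmost square in \cref{const:periodic-derived-local-duality}. Invoking the Barr--Beck + local duality compatibility established in \cref{app:barr-beck}, I would then identify
\[
[\Mod_{P\1}(D(\Psi))]^{I-tors}\;\simeq\;\Mod_{\Gamma_I P\1}\bigl(D(\Psi)^{I-tors}\bigr).
\]
Applying \cref{lm:derived-torsion-if-homology-torsion} converts the base into $D(\Psi^{I-tors})$, and since $\Gamma_I P\1 \simeq P \Gamma_I \1$ is the periodization of the unit of the torsion subcategory, a second application of \cref{rm:periodic-derived-as-modules} inside $D(\Psi^{I-tors})$ identifies the resulting module category with $D^{per}(\Psi^{I-tors})$.

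The main obstacle, and the place where the appendix does the real work, is the commutation of \emph{modules over a commutative algebra} with \emph{$I$-torsion objects}. Concretely, one must check that the monad $P\1\otimes(-)$ on $D(\Psi)$ restricts to a monad on the torsion subcategory and that the Eilenberg--Moore category of this restricted monad agrees with $\Mod_{\Gamma_I P\1}(D(\Psi)^{I-tors})$. A secondary subtlety is the matching of monoidal units: the symmetric monoidal structure that $\Comod_\Psi^{I-tors}$ inherits from \cref{rm:torsion-comodules-grothendieck-monoidal} does not have $A$ as its unit, and one has to verify that $\Gamma_I P\1$ represents the periodic unit of this inherited structure so that the final step of the identification is the canonical one.
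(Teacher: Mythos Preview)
Your proposal is correct and follows essentially the same route as the paper: identify $D^{per}(\Psi)$ as $P\1$-modules via \cref{rm:periodic-derived-as-modules}, apply the appendix result (\cref{thm:modular-bb-torsion}) to pass to torsion and obtain $\Mod_{\Gamma_I P\1}(D(\Psi)^{I-tors})$, use \cref{lm:derived-torsion-if-homology-torsion} and the commutation $\Gamma_I P\1\simeq P\Gamma_I\1$, and then apply \cref{rm:periodic-derived-as-modules} once more inside the torsion world. The obstacles you flag---restricting the Barr--Beck monad to the torsion subcategory and matching $\Gamma_I P\1$ with the periodic unit for $\Comod_\Psi^{I-tors}$---are exactly the points the paper isolates, the first being handled by \cref{thm:modular-bb-torsion} and the second by the smashing property of both functors together with \cref{rm:torsion-comodules-grothendieck-monoidal}.
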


The proof of this uses the fact that Barr-Beck adjunctions commute with local duality. Proving this here disrupts the flow of the paper, so we deferr it to \cref{app:barr-beck}. 

\begin{proof}
    As $\Comod_\Psi$ is symmetric monoidal we have by \cref{rm:periodic-derived-as-modules} an equivalence
    \[D^{per}(\Psi)\simeq \Mod_{P\1}(D(\Psi)),\]
    coming from the periodicity Barr-Beck adjunction.
    By \cref{thm:modular-bb-torsion} this induces a Barr-Beck adjunction on the torsion subcategories, which gives an equivalence 
    $$[D^{per}(\Psi)]^{I-tors}\simeq \Mod_{\Gamma_I^\Psi (P\1)}(D(\Psi)^{I-tors}).$$
    Since $\Gamma_I^\Psi$ is a smashing colocalization, and $P$ is given by tensoring with $P(\1)$, they do in fact commute. By \cref{lm:derived-torsion-if-homology-torsion} we have $D(\Psi)^{I-tors}\simeq D(\Psi^{I-tors})$, hence the above equivalence can be rewritten as
    $$[D^{per}(\Psi)]^{I-tors}\simeq \Mod_{P(\Gamma_I^\Psi \1)}(D(\Psi^{I-tors})).$$
    Now, also $\Comod_\Psi^{I-tors}$ is symmetric monoidal, so \cref{rm:periodic-derived-as-modules} gives an equivalence 
    \[D^{per}(\Psi^{I-tors})\simeq \Mod_{P(\Gamma_I^\Psi\1)}(D(\Psi^{I-tors})),\]which finishes the proof.
\end{proof}

\section{Exotic algebraic models}
\label{sec:exotic-algebraic-models}

We now have two sets of local duality diagrams, one coming from chromatic homotopy theory, see \cref{ex:local-duality-chromatic}, and one from the homological algebra of Adams Hopf algebroids, see \cref{ex:local-duality-comod}. We can also pass between these duality theories, by using homology theories. In particular, if we let $E=E_n$ be height $n$ Morava $E$-theory at a prime $p$, then we have the $E$-homology functor $E_*\colon \sp\np\longrightarrow \comod\EE$ converting between homotopy theory and algebra. We can, in some sense, say that $E_*$ approximates homotopical information by algebraic information. 

The goal of this section is to set up an abstract framework for studying how good such approximations are. The version we recall below was developed in \cite{patchkoria-pstragowski_2021}, taking inspiration from \cite{franke_96} and \cite{pstragowski_2022}.

\subsection{Adapted homology theories}

Adapted homology theories are particularily well behaved homology theories that have associated Adams type spectral sequences giving computational benefits over other homology theories. 

\begin{definition}
    \label{def:homology-theory}
    Let $\C$ be a presentable symmetric monoidal stable $\infty$-category and $\A$ an abelian category with a local grading $[1]$. A functor $H\colon \C\longrightarrow \A$ is called a {\defn conservative homology theory} if:
    \begin{enumerate}
        \item $H$ is additive
        \item for a cofiber sequence $X\to Y\to Z$ in $\C$, then $HX\to HY\to HZ$ is exact in $\A$
        \item there is a natural isomorphism $H(\Sigma X)\equiv (HX)[1]$ for any $X\in \C$
        \item $H$ reflects isomorphisms. 
    \end{enumerate}
\end{definition}

\begin{remark}
    The first two axioms make $H$ a homological functor, the third makes $H$ into a locally graded functor, i.e., a functor that preserves the local grading, and the last makes it a conservative functor. 
\end{remark}

\begin{example}
    Let $R$ be a ring spectrum. Then the functor $\pi_*\colon \Mod_R\longrightarrow \Mod_{R_*}$ defined as $\pi_* M = [\S, M]_*$ is a conservative homology theory. 
\end{example}

\begin{example}
    Let $R$ be a ring spectrum. The functor $R_*(-)\colon \sp \longrightarrow \Mod_R$, defined as the composition 
    $$\sp\overset{R\otimes (-)}\longrightarrow \Mod_R \overset{\pi_*}\longrightarrow \Mod_{R_*},$$
    is a homology theory. If $R$ is of Adams type, then $R_*(-)$ naturally lands in the subcategory $\Comod_{R_*R}$. If we restrict the domain of $R_*$ to the category of $R$-local spectra, then it is a conservative homology theory. For the rest of the paper we will use $R_*$ to denote the restricted conservative homology theory $R_*\colon \sp_R\to \Comod_{R_*R}$.
\end{example}

\begin{remark}
    Recall that we are really interested in the category $\spK$ of $K_p(n)$-local spectra. The spectrum $K_p(n)$ is a field object in $\sp$, and its homotopy groups $\pi_* K_p(n)$ are graded fields. Hence the homology theory $K_p(n)_*\colon \spK\longrightarrow \Comod_{K_*K}$ is too simple to exhibit the algebraicity properties that we want. There is, however, a version of $E_*$-homology on $\spK$, defined by $E_*^\vee (X) = \pi_*L_{K_p(n)}(E\otimes X)$. This functor is unfortunately not a homology theory, and the associated category of comodules is not abelian. This is the reason for instead using the monochromatic category $\M\np$ and the category of $I_n$-power torsion comodules, as these inherit nicer homological properties we can exploit. 
\end{remark}

\begin{definition}
    \label{def:faithful-lift}
    Let $H\colon \C\longrightarrow \A$ be a homology theory and $J$ an injective object in $\A$. An object $\bar{J}\in \C$ is said to be an {\defn injective lift} of $J$ if it represents the functor 
    $$\Hom_\A(H(-),J)\colon \C^{op}\longrightarrow \A b$$
    in the homotopy category $h \C$, i.e. $\Hom_\A(H(-),J)\cong [-,\bar{J}]$. We call $\bar{J}$ a {\defn faithful lift} if the map $H(\bar{J})\longrightarrow J$ coming from the identity on $\bar{J}$ is an equivalence. 
\end{definition}

\begin{definition}
    \label{def:adapted-homology-theory}
    A homology theory $H\colon \C\longrightarrow \A$ is said to be {\defn adapted} if $\A$ has enough injectives, and for any injective $J \in \A$ there is a faithful lift $\bar{J}\in \C$. 
\end{definition}

\begin{example}
    We again return to our two guiding examples $\pi_*\colon \Mod_R\longrightarrow \Mod_{R_*}$ and $R_*\colon \sp_R\longrightarrow \Comod_{R_*R}$, where $R$ is an Adams type ring spectrum. Both functors are conservative adapted homology theories, with faithful lifts provided by Brown representability. 
\end{example}

\begin{remark}
    The definition of an adapted homology theory $H$ states that for any injective $J\in \A$, there is some object $\bar{J}\in \C$ together with an equivalence $[X,\bar{J}]\simeq \Hom_\A(HX, J).$ Because $\A$ has enough injective objects, we can use these equivalences to approximate homotopy classes of maps by repeatedly mapping into injective envelopes. This gives precisely an associated Adams spectral sequence for the homology theory $H$. In fact, Patchkoria and Pstr{\k a}gowski proved that there is a bijection between adapted homology theories and Adams spectral sequences, see \cite[3.24, 3.25]{patchkoria-pstragowski_2021}. The construction of the Adams spectral sequence associated to an adapted homology theory $H\colon \C\longrightarrow \A$ is given in \cite[2.24]{patchkoria-pstragowski_2021}, or alternatively as a totalization spectral sequence in \cite[2.27]{patchkoria-pstragowski_2021}. 
\end{remark}

In our particular interes $R=E_n$, the associated adapted homology theories $\pi_*$ and $E_*$ are even nicer than a general adapted homology theory. This is because the category of comodules is particularily simple. 

\begin{definition}
    \label{def:cohomological-dimension}
    Let $\A$ be a locally graded abelian category with enough injectives. Then the {\defn cohomological dimension} of $\A$ is the smallest integer $d$ such that $\Ext^{s,t}_\A(-,-) \cong 0$ for all $s>d$. 
\end{definition}

\begin{example}
    \label{ex:cohomological-dimension-comodEE}
    Let $n$ be an integer, $p$ a prime such that $p>n+1$ and $E=E_n$ Morava $E$-theory at height $n$. Then by \cite[2.5]{pstragowski_2021} the category $\Comod\EE$ has cohomological dimension $n^2+n$. 
\end{example}

For certain Adams type ring spectra $R$ we get decompositions of the category $\Comod_{R_*R}$ into periodic families of subcategories. Such decompositions allows for the construction of partial inverses to the associated homology theories. 

\begin{construction}
    \label{const:splitting-of-comodules}
    Let $R$ be an Adams-type ring spectrum such that $\pi_*R$ is concentrated in degrees divisible by some positive number $q+1$, i.e., $\pi_m R = 0$ for all $m\neq 0 \mod q+1$. Any comodule $M$ in the category $\Comod_{R_*R}$ splits uniquely into a direct sum of subcomodules $\bigoplus_{\phi \in \Z/q+1} M_\phi$ such that $M_\phi$ is concentrated in degrees divisible by $\phi$. Such a splitting induces a decomposition of the full subcategory of injective objects 
    $$\Comod_{R_*R}^{inj} \simeq \Comod_{R_*R, 0}^{inj}\times \Comod_{R_*R, 1}^{inj}\times \cdots \times \Comod_{R_*R, q}^{inj}$$ 
    where the category $\Comod_{R_*R, \phi}^{inj}$ denotes the full subcategory spanned by injective comodules concentrated in degrees divisible by $\phi$. 
    
    Let {\defn $h_k \C$} denote the homotopy $k$-category of $\C$, obtained by $k+1$-truncating all the mapping spaces in $\C$.  
    The lift associated with each injective via the Adapted homology theory $R_*$ allows us to construct a partial inverse to $R_*$, called the Bousfield functor $\beta^{inj}$ in \cite{patchkoria-pstragowski_2021}. It is a functor $\beta^{inj}\colon \Comod_{R_*R}^{inj}\longrightarrow h_{q+1} \sp_R^{inj}$, where the latter category is the homotopy $(q+1)$-category of the full subcategory of $\sp_R$ containing all spectra $X$ such that $R_*X$ is injective and $[X,Y]\to \Hom_{R_*R}(R_*X, R_*Y)$ is a bijection for all $Y\in \sp_R$. 
\end{construction}
    
In order to mimic this behavior for a general adapted homology theory, Franke introduced the notion of a splitting of an abelian category. 
    
\begin{definition}[\cite{franke_96}]
    \label{def:splitting-of-abelian-category}
    Let $\A$ be an abelian category with a local grading $[1]$. A {\defn splitting} of $\A$ of order $q+1$ is a collection of Serre subcategories $\A_\phi \subseteq \A$ indexed by $\phi \in \Z/(q+1)$ satisfying
    \begin{enumerate}
        \item $[k]\A_n \subseteq \A_{n+k \mod (q+1)}$ for any $k\in \Z$, and 
        \item the functor $\prod_{\phi}\A_\phi\longrightarrow \A$, defined by $(a_\phi)\mapsto \oplus_\phi a_\phi$, is an equivalence of categories. 
    \end{enumerate}
\end{definition}
    
\begin{example}
    \label{ex:splitting-modules}
    As we saw above in \cref{const:splitting-of-comodules}, the category of comodules over an Adams Hopf algebroid $(R_*, R_*R)$, where $R_*$ is concentrated in degrees divisible by $q+1$, has a splitting of order $q+1$. This, then, also holds for the discrete Hopf algebroid $(R_*, R_*)$, giving the module category $\Mod_{R_*}$ a splitting of order $q+1$ as well. 
\end{example}

\begin{example}
    In the case $R=E(1)$ this has been written out in detail in \cite[Section 4]{barnes-roitzheim_2011}. The Serre subcategories are all copies of the category of $p$-local abelian groups together with Adams operations $\psi^k$ for $k\neq 0$ in $\Z_{(p)}$. The shift leaves the underlying module unchanged, but changes the Adams operation. 
\end{example}
    
\begin{definition}
    \label{not:pure-weight}
    We will say that objects $A\in \A_\phi$ are of {\defn pure weight $\phi$}. 
\end{definition}
    
\begin{remark}
    Just as for $\Comod_{R_*R}$, a splitting of order $q+1$ of a locally graded abelian category $\A$ is enough to define, for any adapted homology theory $H\colon \C\longrightarrow \A$, a partial inverse Bousfield functor $\beta^{inj}$, see \cite[Section 7.2]{patchkoria-pstragowski_2021}. 
\end{remark}

%%%%%%%%%%%%%%%%%%%%%%%%%%%%%%%%%%%%%%%%%%%%%%%%%%%%%%%%%%%%%%%%%%%%%%%
%%%%%%%%%%%%%%%%%%%%%%%%%%%%%%%%%%%%%%%%%%%%%%%%%%%%%%%%%%%%%%%%%%%%%%%

\subsection{Exotic homology theories}

In order to make some statements about exotic equivalences a bit simpler, we introduce the concept of exotic adapted homology theories. Note that this is not the way similar results are phrased in \cite{patchkoria-pstragowski_2021}, but the notation serves as a shorthand for the criteria that they use. 

\begin{definition}
    \label{def:k-exotic-homology-theory}
     Let $H\colon \C\longrightarrow \A$ be a homology theory. We say $H$ is {\defn $k$-exotic} if $H$ is adapted, conservative, $\A$ has finite cohomological dimension $d$ and a splitting of order $q+1$ such that $k=d+1-q>0$. 
\end{definition}
    
The remarkable thing about a $k$-exotic homology theory $H\colon \C\longrightarrow \A$ is that it forces the stable $\infty$-category $\C$ to be approximately algebraic. Intuitively: As the order of the splitting is greater than the cohomological dimension, the $H$-Adams spectral sequence is very sparse and well-behaved. There is a partial inverse of $H$ via the Bousfield functor $\beta\colon \A^{inj}\to h_k \C^{inj}$, which forces a certain subcategory of a categorified deformation of $H$ to be equivalent to both $h_k \C$ and $h_k D^{per}(\A)$. This is the contents of Franke's algebraicity theorem. 
    
\begin{theorem}[{\cite[7.56]{patchkoria-pstragowski_2021}}]
    \label{thm:franke-algebraicity}
    Let $H\colon \C\longrightarrow \A$ be a $k$-exotic homology theory. Then there is an equivalence of homotopy $k$-categories $h_k \C \simeq h_k D^{per}(\A).$
\end{theorem}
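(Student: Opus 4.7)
The plan is to construct an equivalence $F\colon h_k D^{per}(\A)\to h_k\C$ by bootstrapping from the Bousfield functor $\beta^{inj}\colon \A^{inj}\to h_k\C^{inj}$, with the sparsity forced by $k=d+1-q>0$ doing the heavy lifting. For an object $M\in D^{per}(\A)$, I would pick a periodic injective resolution $M\to J^\bullet$ (possible since $\A$ has enough injectives), apply $\beta^{inj}$ levelwise, and totalize in $\C$ to produce $FM := \mathrm{Tot}(\beta^{inj} J^\bullet)\in h_k\C$. Because the splitting of $\A$ has order $q+1$, the resolution is $(q+1)$-periodic in weight, and the differentials of $\beta^{inj}J^\bullet$ are pinned down to prescribed weight components, which makes the totalization tractable.

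The main technical hurdle is that $\beta^{inj}$ is a priori only a functor of $k$-categories, so at each assembly stage one must check that the homotopical obstructions to coherent gluing vanish. These obstructions live in $\Ext^{s,t}_\A$-groups between the injectives appearing in the resolution. By the cohomological dimension bound they vanish for $s>d$; by the splitting of order $q+1$ the weight $t$ is constrained modulo $q+1$; and the combination $k=d+1-q>0$ is precisely the statement that these constraints leave no room for obstructions in the range of truncations we care about. This is essentially a Postnikov-tower argument along the Adams filtration, and is where the numerical hypothesis earns its keep.

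Once $F$ is constructed, full faithfulness would follow by comparing two totalization spectral sequences: for $X,Y\in D^{per}(\A)$, both the mapping space in $h_k D^{per}(\A)$ and the mapping space $\mathrm{map}_\C(FX,FY)$ in $h_k\C$ are computed by spectral sequences whose $E_2$-page is $\Ext^{s,t}_\A(HX,HY)$ (periodically graded), the former tautologically and the latter being the $H$-Adams spectral sequence. The same sparsity argument forces both to degenerate on $\pi_{\leq k}$, so the comparison map is an isomorphism of $k$-truncated mapping spaces. For essential surjectivity, conservativity of $H$ ensures that any $Y\in\C$ is recovered from its $H$-Adams tower of injective envelopes, exhibiting $Y$ as $F$ applied to an injective resolution of $HY$.

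I expect the obstruction-theoretic construction of $F$ to be the main obstacle: it requires organizing iterated injective envelopes so that the Bousfield lifts assemble coherently in $h_k\C$, despite $\beta^{inj}$ being defined only at the level of homotopy $k$-categories. The natural formalization would proceed by induction on the Adams filtration, with the inductive step reducing to a vanishing statement for an $\Ext$-group controlled by the interplay between cohomological dimension and the splitting. Once the coherence issues in the construction of $F$ are handled, the remaining verifications of full faithfulness and essential surjectivity are comparatively formal consequences of the collapsing Adams spectral sequence.
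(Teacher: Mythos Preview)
The paper does not supply a proof of this theorem: it is quoted verbatim from \cite[7.56]{patchkoria-pstragowski_2021} and used as a black box in the applications of \cref{ssec:algebraicity-modules} and \cref{ssec:algebraicity-spectra}. There is therefore no ``paper's own proof'' to compare your proposal against.

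For what it is worth, your sketch captures the classical Franke strategy (lift injectives via the Bousfield functor, resolve, totalize, and use the gap $q+1>d$ to kill obstructions and collapse the Adams spectral sequence), and the numerology you isolate is exactly the one that drives the result. The actual argument in the cited reference is organized somewhat differently: Patchkoria--Pstr{\k a}gowski build an intermediate deformation category---the perfect derived category $\D^\omega(\C)$ attached to the adapted homology theory---and identify both $h_k\C$ and $h_k D^{per}(\A)$ with a common truncation of it, rather than constructing a functor $F$ directly by hand as you propose. Their framework absorbs the coherence bookkeeping you flag as the ``main obstacle'' into the construction of the deformation itself, whereas your outline would have to confront those coherence issues explicitly at each stage of the Adams tower. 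Either way, this is outside the scope of the present paper, which only invokes the statement.
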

    
There are several interesting examples of homology theories satisfying \cref{thm:franke-algebraicity}, see Section 8 in \cite{patchkoria-pstragowski_2021}. We highlight again our two guiding examples but focus specifically on certain Morava $E$-theories. 

\begin{example}[{\cite[8.7]{patchkoria-pstragowski_2021}}]
    \label{ex:chromatic-algebraicity-modules}
    Let $p$ be a prime, $n$ be a non-negative integer, and $E$ a height $n$ Morava $E$-theory concentrated in degrees divisible by $2p-2$, for example Johnson-Wilson theory $E(n)$. If $k=2p-2-n>0$, then the functor $\pi_* \colon \ModE \longrightarrow \modE$ is a $k$-exotic homology theory, giving an equivalence 
    $$h_k \ModE \simeq h_k D^{per}\modE.$$
\end{example}
    
\begin{notation}
    For the following example and the rest of the paper, we follow the notation of \cite{barthel-schlank-stapleton_2020}, \cite{barthel-schlank-stapleton_2021} and \cite{barkan_2023} and denote the category $D^{per}(\comod\EE)$ by {\defn $\Fr\np$}. 
\end{notation}
    
\begin{example}[{\cite[8.13]{patchkoria-pstragowski_2021}}]
    \label{ex:chromatic-algebraicity}
    Let $p$ be a prime, $n$ be a non-negative integer, and $E$ any height $n$ Morava $E$-theory. If $k=2p-2-n^2-n>0$, then the functor $E_* \colon \sp\np\longrightarrow \comod\EE$ is a $k$-exotic homology theory, giving an equivalence 
    \[h_k \sp\np \simeq h_k \Fr\np.\]
\end{example}

\begin{remark}
    As noted in \cite[5.29]{barthel-schlank-stapleton_2020}, this equivalence is strictly exotic for all $n\geq 1$ and primes $p$. In other words, it can never be made into an equivalence of stable $\infty$-categories. In particular, the mapping spectra in $\Fr\np$ are $H\Z$-linear, while the mapping spectra in $\sp\np$ are only $H\Z$-linear for $n=0$. 
\end{remark}
    
\begin{definition}
    Let $H\colon \C\longrightarrow \A$ be a $k$-exotic homology theory. The category $D^{per}(\A)$ is called an {\defn exotic algebraic model} of $\C$ if the equivalence $h_k \C \simeq h_k D^{per}(\A)$ can not be enhanced to an equivalence of $\infty$-categories $\C\simeq D^{per}(\A).$
\end{definition}

\begin{remark}
    The notion of being exotically algebraic is part of a complex hierarchy of algebraicity levels, see \cite{ishak-roitzheim-williamson_2023} for a great exposé. 
\end{remark}
    
\begin{remark}
    The existence of an exotic algebraic model for a stable $\infty$-category $\C$ implies that the category is not rigid. This means, in particular, that there cannot exist a $k$-exotic homology theory with source $\sp$ or $\sp_{(p)}$ as these are all rigid for all primes, see \cite{schwede_07}, \cite{schwede-schipley_02} and \cite{schwede_01}. The same holds for $\sp_{1,2}$, as this is rigid by \cite{roitzheim_07}, and similarily for $\sp_{K_2(1)}$ by \cite{ishak_19}. This shows that being $k$-exotic is quite a strong requirement. 
\end{remark}

\section{Algebraicity for monochromatic categories}

We are now ready to prove our main results. 

\subsection{Monochromatic modules}
\label{ssec:algebraicity-modules}

We start by proving \cref{thm:C}, which we will prove in three steps. We first show it is a conservative adapted homology theory, then that we have finite cohomological dimension and lastly that we have a splitting. For the rest of this section, we assume that $E$ is a version of height $n$ Morava $E$-theory at the prime $p$ that is concentrated in degrees divisible by $2p-2$, for example, $E(n)$ or $E_n^{h\F_p^\times}$.  

The following lemma is the $I_n$-torsion version \cite[3.14]{barthel-frankland_15}, and the proof is similar.

\begin{lemma}
    \label{lm:monochromatic-iff-torsion-modules}
    If $M$ is an $E$-module, then $M\in \Modt$ if and only if $\pi_* M\in \modt$. 
\end{lemma}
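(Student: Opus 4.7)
The plan is to follow the strategy of Barthel--Frankland's Lemma 3.14, which is cleaner in the $E$-module setting than in the comodule setting since one does not need to track the comodule structure on homotopy. Recall that, by \cref{rm:local-duality-modules}, $\Mod_E^{I_n\text{-}tors}$ is by construction the localizing tensor ideal $\Loc^\otimes(E/I_n) \subseteq \Mod_E$ generated by a Koszul-style $E$-module with homotopy $E_*/I_n$. I would prove the two implications separately.

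For the forward direction, I would consider the full subcategory $\mathcal{T} \subseteq \Mod_E$ of modules $N$ with $\pi_* N$ an $I_n$-power torsion $E_*$-module, and show that $\mathcal{T}$ is a localizing tensor ideal containing $E/I_n$. Closure under colimits and suspensions is immediate from the fact that $\pi_*$ commutes with filtered colimits and cofiber sequences together with the analogous closure properties of $\modt$ inside $\Mod_{E_*}$. Closure under $-\otimes_E P$ for an arbitrary $E$-module $P$ follows from the Künneth spectral sequence $\Tor_{*,*}^{E_*}(\pi_* N, \pi_* P) \Rightarrow \pi_*(N\otimes_E P)$, whose $E_2$-page is termwise $I_n$-power torsion whenever $\pi_* N$ is, since $\modt$ is a tensor ideal in $\Mod_{E_*}$. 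Finally, $\pi_*(E/I_n) = E_*/I_n$ is visibly $I_n$-torsion, so $E/I_n \in \mathcal{T}$ and hence $\Mod_E^{I_n\text{-}tors} \subseteq \mathcal{T}$.

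For the reverse direction, I want to show that if $\pi_* M$ is $I_n$-power torsion then $L_{I_n} M \simeq 0$, so that the local duality cofiber sequence $\Gamma_{I_n} M \to M \to L_{I_n} M$ collapses to an equivalence $M \simeq \Gamma_{I_n} M \in \Mod_E^{I_n\text{-}tors}$. Since $\pi_*\colon \Mod_E \to \Mod_{E_*}$ is conservative, it suffices to verify $\pi_* L_{I_n} M = 0$. Using that $I_n = (v_0,\ldots,v_{n-1})$ is finitely generated, regular and invariant, the localization $L_{I_n}$ on $\Mod_E$ admits the standard \v{C}ech/Koszul description built from the element-wise inversions $M \to M[v_i^{-1}]$; in particular, $L_{I_n} M \simeq 0$ as soon as each $M[v_i^{-1}]$ vanishes. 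But on homotopy $\pi_* M[v_i^{-1}] = (\pi_* M)[v_i^{-1}]$, and this is zero precisely because every element of $\pi_* M$ is killed by a power of $I_n$, hence by a power of $v_i$. Conservativity of $\pi_*$ yields $M[v_i^{-1}] \simeq 0$ for each $i$, and the \v{C}ech assembly gives $L_{I_n} M \simeq 0$.

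The main technical point to pin down is the \v{C}ech/Koszul description of $L_{I_n}$ on $\Mod_E$, since the local duality setup recalled in Section~2 produces it only abstractly as the fiber of $\Gamma_{I_n}$. This is standard for a finitely generated regular invariant ideal in the chromatic setting (compare Greenlees--May and Hovey--Strickland) and will be cited rather than reproved; everything else reduces to formal manipulations with the Künneth spectral sequence and conservativity of $\pi_*$.
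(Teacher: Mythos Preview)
Your argument is correct, and it takes a genuinely different route from the paper's proof. The paper handles both directions uniformly via the local cohomology spectral sequence $H_{I_n}^{-s}(\pi_* X)\Rightarrow \pi_{s+t}\Gamma_{I_n} X$ of \cite[3.19]{barthel-heard-valenzuela_2018}: for the forward direction, the $E_2$-page consists of local cohomology groups, which are always $I_n$-power torsion, and Grothendieck vanishing forces strong convergence; for the reverse direction, if $\pi_* X$ is already torsion then $H^0_{I_n}\pi_* X=\pi_* X$ and the higher local cohomology vanishes, so the spectral sequence collapses and the comparison map $\Gamma_{I_n} X\to X$ is a $\pi_*$-isomorphism.

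By contrast, you replace the spectral sequence input on each side with a more structural argument: the forward direction is a localizing-subcategory containment (note that in $\Mod_E$ every localizing subcategory is automatically a tensor ideal since the unit generates, so the K\"unneth step is not even strictly needed, though it is correct as stated and converges strongly because $E_*$ has finite global dimension $n$); the reverse direction trades the spectral sequence collapse for the explicit \v{C}ech/Koszul model of $L_{I_n}$, reducing to the vanishing of each $M[v_i^{-1}]$. The paper's approach is more self-contained given the local duality framework already in place, while yours is more elementary in that it avoids the local cohomology spectral sequence altogether and makes the dependence on the regular generating sequence $(v_0,\ldots,v_{n-1})$ explicit. Both are perfectly valid; the only external input you need to cite carefully is the identification of $\Gamma_{I_n}E$ with the Koszul object $\bigotimes_i \mathrm{fib}(E\to E[v_i^{-1}])$, which is standard (Greenlees--May, or \cite{barthel-heard-valenzuela_2018}).
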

\begin{proof}
    Let $X\in \Modt$. By \cite[3.19]{barthel-heard-valenzuela_2018} there is a strongly convergent spectral sequence of $E(n)_*$-modules with signature 
    $$E_2^{s,t} = (H_{I_n}^{-s}\pi_* X)_t \implies \pi_{s+t}M_n X,$$
    where $H_{I_n}^{-s}$ denotes local cohomology. By \cite[2.1.3(ii)]{brodmann-sharp_1998} the $E_2$-page consist of only $I_n$-power torsion modules. As $\modt$ is abelian, it is closed under quotients and subobjects, as as the higher pages are created from the $E_2$-page using quotients and subobjects, they must also consist of only $I_n$-power torsion modules. In particular, the $E_\infty$-page is all $I_n$-power torsion. By Grothendieck's vanishing theorem, see for example \cite[6.1.2]{brodmann-sharp_1998}, $H_{I_n}^s(-)\cong 0$ for $s>n$, hence the abutment of the spectral sequence $\pi_* M_n X$ is a finite fintration of $I_n$-power torsion $E_*$-modules, and is therefore itself an $I_n$-power torsion module. Since $X$ was assumed to be monochromatic, i.e. $X\in \Modt$, we have $\pi_* M_n X\cong \pi_* X$, and thus $\pi_* X\in \modt$. 

    Assume now $X\in \ModE$ such that its homotopy groups are $I_n$-power torsion. Monochromatization gives a map $\phi\colon M_n X\longrightarrow X$, and as $\pi_*M_nX$ is $I_n$-power torsion this map factors on homotopy groups as 
    $$\pi_* M_n X\longrightarrow H^0_{I_n}\pi_* X\longrightarrow \pi_* X,$$
    where the first map is the edge morphism in the above-mentioned spectral sequence. As $\pi_* X$ was assumed to be $I_n$-power torsion we have $\pi_*X\cong H^0_{I_n}\pi_* X$, and $H^s_{I_n}\pi_* X \cong 0$ for $s>0$. Hence the spectral sequence collapses to give the isomorphism $\pi_* M_n X\cong H^0_{I_n}\pi_* X$, which shows that $\pi_* \phi$ is an isomorphism. As $\pi_*$ is conservative $\phi$ was already an isomorphism, hence $X\in \Modt$. 
\end{proof}

\begin{lemma}
    \label{lm:conservative-adapted-torsion-modules}
    Let $p$ be a prime and $n$ a natural number. Then the functor 
    $$\pi_*\colon \Modt\longrightarrow \modt$$
    is a conservative adapted homology theory. 
\end{lemma}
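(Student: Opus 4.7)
The plan is to verify the four defining properties of a conservative adapted homology theory in turn, leaning heavily on the fact that $\pi_*\colon \Mod_E\to \modE$ is already known to be one (\cref{ex:chromatic-algebraicity-modules}) and on \cref{lm:monochromatic-iff-torsion-modules}. Concretely, \cref{lm:monochromatic-iff-torsion-modules} tells us the restriction of $\pi_*$ to $\Modt$ factors through $\modt\subseteq \modE$, and the local grading on $\modt$ is the one inherited from $\modE$, so the suspension compatibility and additivity are immediate. Cofiber sequences in $\Modt$ are precisely the cofiber sequences in $\Mod_E$ that lie in $\Modt$, since $\Modt\hookrightarrow \Mod_E$ is the inclusion of a stable subcategory; exactness after $\pi_*$ then follows because $\modt$ is a full abelian subcategory of $\modE$ closed under kernels and cokernels (\cref{rm:torsion-comodules-grothendieck-monoidal} applied to the discrete Hopf algebroid). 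Conservativity is inherited directly from $\pi_*\colon \Mod_E\to \modE$, since $\Modt\hookrightarrow \Mod_E$ is fully faithful.

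The core of the argument is showing that every injective $J\in\modt$ admits a faithful lift in $\Modt$. I will first reduce to a well-behaved class of injectives. Since $\modt\hookrightarrow \modE$ is an exact left adjoint with right adjoint $T_{I_n}$, the torsion functor preserves injectives, so for every injective $J\in\modt$ one can embed $J\hookrightarrow J'$ in $\modE$ with $J'$ injective, and then $J\hookrightarrow T_{I_n}J'$ in $\modt$ is a split monomorphism. Thus every injective in $\modt$ is a retract of an object of the form $T_{I_n}J'$ with $J'$ injective in $\modE$.

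Next, I would construct lifts for these distinguished injectives. Since $\pi_*\colon \Mod_E\to\modE$ is adapted, there is a faithful lift $\bar{J'}\in\Mod_E$ of $J'$. Applying monochromatization and using the local cohomology spectral sequence from the proof of \cref{lm:monochromatic-iff-torsion-modules}, the fact that $J'$ is injective forces $H^{s}_{I_n}J'=0$ for $s>0$, so the spectral sequence collapses to
\[
\pi_* M_n \bar{J'}\;\cong\; H^{0}_{I_n}J'\;\cong\; T_{I_n}J'.
\]
A short computation using that $\bar{J'}$ represents $\Hom_{E_*}(\pi_*(-),J')$ and that any map from a monochromatic module into $\bar{J'}$ factors through $M_n\bar{J'}$ (by adjunction of the local duality diagram of \cref{rm:local-duality-modules}) will then show that $M_n\bar{J'}$ represents $\Hom_{\modt}(\pi_*(-),T_{I_n}J')$ on $\Modt$, so it is a faithful lift of $T_{I_n}J'$. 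Finally, passing to the retract $J\subseteq T_{I_n}J'$: since $\Modt$ is idempotent-complete, the splitting idempotent on $T_{I_n}J'$ lifts uniquely along the equivalence $\pi_*\colon \mathrm{End}(M_n\bar{J'})^{\mathrm{inj}}\to \mathrm{End}(T_{I_n}J')$ (coming from representability on injectives), and the resulting summand $\bar{J}\in\Modt$ is the sought faithful lift of $J$.

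The main obstacle will be the adaptedness step, and specifically the bookkeeping that guarantees the retract $\bar{J}$ really represents $\Hom_{\modt}(\pi_*(-),J)$ and not merely a direct summand thereof up to a subtle splitting ambiguity. All other verifications are essentially inheritance arguments from $\Mod_E$ and $\modE$ together with \cref{lm:monochromatic-iff-torsion-modules}.
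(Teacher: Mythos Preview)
Your proposal is correct, but it takes a longer route than the paper's proof. The paper observes (via \cite[3.16]{barthel-heard-valenzuela_2020}) that every injective $J$ in $\modt$ can be written as $T_{I_n}^{E_*}Q$ for $Q$ injective in $\modE$, and then uses---implicitly in the module case, explicitly in the later comodule proof---that $T_{I_n}^{E_*}Q$ is \emph{already injective in $\modE$} (a classical fact over Noetherian rings). Hence one can lift $J$ directly via the adapted structure on $\ModE$, and the resulting $\bar{J}$ lies in $\Modt$ by \cref{lm:monochromatic-iff-torsion-modules}; no monochromatization, spectral sequence collapse, or retract argument is needed.

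Your approach sidesteps the fact that torsion preserves injectivity over $E_*$: you lift the ambient injective $J'\in\modE$ instead, apply $M_n$, and identify $\pi_*M_n\bar{J'}\cong T_{I_n}J'$ via the collapse of the local cohomology spectral sequence from \cref{lm:monochromatic-iff-torsion-modules}, then descend to retracts. This is more self-contained and would generalize to situations where torsion of an injective need not be injective in the ambient category, but in the present setting the paper's shortcut makes the argument a two-line affair. Note also that your retract step is in fact unnecessary once you know $T_{I_n}J'$ is injective in $\modE$: the injective $J$ is then itself of this form, not merely a retract of one.
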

\begin{proof}
    We first note that the functor $\pi_*\colon \ModE\longrightarrow \modE$ is a conservative adapted homology theory. By \cref{lm:monochromatic-iff-torsion-modules} its restriction to $\Modt$ lands in $\modt$, hence autmoatically $\pi_*\colon \Modt\longrightarrow \modt$ is a conservative homology theory. 
    
    Let $J$ be an injective $I_n$-power torsion $E$-module. By the proof of \cite[3.16]{barthel-heard-valenzuela_2020} we can assume $J= T^{E_*}_{I_n} Q$ for an injective $E$-module $Q$. Since $\pi_*$ is adapted on $\ModE$ we can chose a faithful injective lift $\bar{J}$ of $J$ to $\ModE$, and since $\bar{J}$ was assumed to have $I_n$-torsion homotopy groups we know by \cref{lm:monochromatic-iff-torsion-modules} that $\bar{J}\in \Modt$. In particular, we have faithful lifts for any injective in $\modt$, which means that $\pi_*\colon \Modt\longrightarrow \modt$ is adapted. 
\end{proof}

\begin{lemma}
    \label{lm:cohomological-dimension-torsion-modules}
    Let $p$ be a prime and $n$ a natural number. Then the category $\modt$ has cohomological dimension $n$. 
\end{lemma}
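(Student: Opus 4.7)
The plan is to bound the cohomological dimension of $\modt$ both above and below by $n$. First I will show that the ambient category $\modE$ has global dimension $n$, then identify $\Ext$-groups of $I_n$-torsion modules computed in $\modt$ with those computed in $\modE$, and finally exhibit a nonzero $\Ext^n$ via a Koszul computation.

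For the upper bound, since $u\in E_2$ is a unit, any graded $E_*$-module decomposes into its even and odd parts, each of which is canonically an (ungraded) $E_0$-module; this gives an equivalence of abelian categories $\modE \simeq \Mod_{E_0}\times \Mod_{E_0}$. The ring $E_0\cong W(\mathbb{F}_{p^n})[[u_1,\ldots,u_{n-1}]]$ is a complete regular local Noetherian ring of Krull dimension $n$, so by Auslander--Buchsbaum--Serre it has global dimension $n$. Consequently $\Ext^{s,t}_{\modE}(M,N)=0$ for all $s>n$ and all $M,N\in \modE$.

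Next I would apply \cref{lm:derived-torsion-if-homology-torsion} to the discrete Hopf algebroid $(E_*,E_*)$---which is trivially of Adams type since $E_*$ is free of rank one over itself---and the finitely generated regular invariant ideal $I_n$. This yields an equivalence of stable $\infty$-categories $D(\modt)\simeq D(\modE)^{I_n\text{-tors}}$, whose right-hand side is by construction a full sub-$\infty$-category of $D(\modE)$. Fullness of the resulting embedding $D(\modt)\hookrightarrow D(\modE)$ produces natural isomorphisms
\[\Ext^s_{\modt}(M,N)\;\cong\;\Ext^s_{\modE}(M,N)\]
for all $M,N\in \modt$ and all $s\geq 0$. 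Combined with the previous paragraph, this gives cohomological dimension at most $n$.

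For the lower bound I would take $M=N=E_*/I_n$, which lies in $\modt$. The Koszul complex on the regular sequence $p,v_1,\ldots,v_{n-1}$ is a free resolution of $E_*/I_n$ of length $n$, and a direct computation yields $\Ext^n_{E_*}(E_*/I_n,E_*/I_n)\neq 0$. Transferring across the isomorphism above gives $\Ext^n_{\modt}(E_*/I_n,E_*/I_n)\neq 0$, so the cohomological dimension is exactly $n$. The main step to verify carefully is the Ext-comparison isomorphism; everything else is standard homological algebra together with a direct invocation of the lemma.
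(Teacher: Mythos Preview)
Your overall strategy matches the paper's: bound above by the cohomological dimension of the ambient $\modE$ via an $\Ext$-comparison, then exhibit a nonzero $\Ext^n$ between torsion modules. Your $\Ext$-comparison, extracted from \cref{lm:derived-torsion-if-homology-torsion} and the full faithfulness of the torsion inclusion into $D(\modE)$, is precisely the content the paper obtains by citing Brodmann--Sharp. The genuine difference is the lower bound. You compute $\Ext^n_{E_*}(E_*/I_n,E_*/I_n)$ directly from the Koszul resolution on the regular sequence $p,v_1,\dots,v_{n-1}$; the paper instead shows $\Ext^n_{E_*}(E_*/I_n^\infty,E_*/I_n)\cong E_*/I_n$ by invoking Hovey--Strickland's identification $L_0 M\cong\Ext^n_{E_*}(H^n_{I_n}(E_*),M)$, together with $H^n_{I_n}(E_*)\cong E_*/I_n^\infty$ and $L_0(E_*/I_n)\cong E_*/I_n$. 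Your route is more elementary and self-contained; the paper's ties the computation to the local-cohomology/derived-completion duality running through the rest of the article.

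One small mismatch to flag: your description of $E_*$ (a unit $u\in E_2$ and $E_0\cong W(\F_{p^n})[[u_1,\dots,u_{n-1}]]$) is that of $2$-periodic Lubin--Tate theory $E_n$, whereas this section fixes an $E$ concentrated in degrees divisible by $2p-2$, e.g.\ $E(n)$ or $E_n^{h\F_p^\times}$. For those there is no unit in degree $2$, and $E_0$ need not be the complete local ring you wrote. The conclusion that $\modE$ has cohomological dimension $n$ still holds (it is a regular graded-Noetherian ring of graded Krull dimension $n$), but your specific even/odd decomposition does not apply as stated; the paper sidesteps this by simply asserting the cohomological dimension of $\modE$.
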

\begin{proof}
    Note first that the category $\modE$ has cohomological dimension $n$, and that $\Ext$-groups in $\modt$ are computed in $\modE$. By \cite[2.1.4]{brodmann-sharp_1998}, this implies that the cohomological dimension of $\modt$ can't be greater than $n$, so it remains to prove that it is exactly $n$. We prove this by computing an $\Ext^n_{E_*}$ group that is non-zero.
    
    By \cite[A.2(d)]{hovey-strickland_99} we have $L_0 M \cong \Ext_{E_*}^n(H_{I_n}^n(E_*), M)$ for any $E_*$module $M$. In other words, the derived completion of an $E_*$-module is the $n$'th derived functor of maps from the $I_n$-local cohomology of $E_*$ into $M$. Choosing $M=E_*/I_n$ we get 
    $$L_0 (E_*/I_n)\cong \Ext_{E_*}^n(H^n_{I_n}(E_*), E_*/I_n).$$
    As any bounded $I_n$-torsion $E_*$-module is $I_n$-adically complete we have, as remarked in \cite[1.4]{barthel-heard_16}, that $L_0 (E_*/I_n)\cong E_*/I_n$. The local cohomology of $E_*$ is also $I_n$-torsion, in particular $H_{I_n}^n E_* = E_*/I_n^\infty$. Hence we have 
    $$\Ext_{E_*}^n(E_*/I_n^\infty, E_*/I_n)\cong E_*/I_n \not \cong 0,$$
    showing that there are two $I_n$-power torsion $E_*$-modules with non-trivial $n$'th $\Ext$, which concludes the proof.
\end{proof}

\begin{lemma}
    \label{lm:splitting-torsion-modules}
    Let $p$ be a prime and $n$ a natural number. Then, the category $\modt$ has a splitting of order $2p-2$. 
\end{lemma}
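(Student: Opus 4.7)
The plan is to restrict the splitting of $\Mod_{E_*}$ of order $2p-2$ recalled in \cref{ex:splitting-modules} to the subcategory $\modt$. For each $\phi \in \Z/(2p-2)$, I would define $(\modt)_\phi := \modt \cap (\Mod_{E_*})_\phi$, the full subcategory of $I_n$-power torsion $E_*$-modules concentrated in degrees congruent to $\phi$ modulo $2p-2$. Each $(\modt)_\phi$ is a Serre subcategory of $\modt$ as an intersection of two Serre subcategories, and the shift compatibility $[k](\modt)_\phi \subseteq (\modt)_{\phi+k}$ is inherited directly from the ambient splitting.

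The core step is then the verification that the summation functor $\prod_\phi (\modt)_\phi \to \modt$ is an equivalence of categories. For essential surjectivity, given $M \in \modt$, I would invoke the ambient decomposition $M \cong \bigoplus_\phi M_\phi$ in $\Mod_{E_*}$ and observe that each summand $M_\phi$ is an $E_*$-submodule of $M$. As a submodule of an $I_n$-power torsion module, each $M_\phi$ is itself $I_n$-power torsion, so $M_\phi \in (\modt)_\phi$. Full faithfulness is inherited from the ambient splitting, since morphisms in $\modt$ are $E_*$-linear maps of the underlying modules and decompose accordingly.

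I do not anticipate a substantive obstacle. The structural fact that makes the restriction go through cleanly is that the generators of $I_n = (p, v_1, \ldots, v_{n-1})$ all sit in degrees divisible by $2p-2$ — indeed $v_i$ has degree $2(p^i - 1) = (2p-2)(p^{i-1} + \cdots + 1)$ — so the $I_n$-action preserves each splitting component. With this in hand the torsion condition is manifestly compatible with the decomposition, and the argument reduces to routine bookkeeping on top of \cref{ex:splitting-modules}.
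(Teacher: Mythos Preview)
Your proposal is correct and follows essentially the same approach as the paper: both restrict the ambient splitting of $\Mod_{E_*}$ from \cref{ex:splitting-modules} to the torsion subcategory and verify the three axioms of \cref{def:splitting-of-abelian-category}. Your definition $(\modt)_\phi = \modt \cap (\Mod_{E_*})_\phi$ coincides with the paper's (the essential image of $T_{I_n}^{E_*}$ on $\Mod_{E_*,\phi}$), and your verifications are slightly more direct---using intersection-of-Serre-subcategories and the submodule argument for the decomposition, where the paper routes shift-compatibility through a Beck--Chevalley square---but the content is the same.
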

\begin{proof}
    By \cite[8.1]{patchkoria-pstragowski_2021} the category $\modE$ has a splitting of order $2p-2$. We will use this to induce a splitting on $\modt$. In particular, we define the pure weight $\phi$ component of $\modt$, denoted $\Mod_{E_*, \phi}^{I_n-tors}$, to be the essential image of $T_{I_n}^{E_*}\colon \modE\longrightarrow \modt$ restricted to the pure weight $\phi$ component $\Mod_{E_*, \phi}$. We claim that this defines a splitting of order $2p-2$ on $\modt$. For the claim to be true, we need to check the axioms in \cref{def:splitting-of-abelian-category}: $(1)$ that the pure weight components are Serre subcategories, $(2)$ that they are shift-invariant and $(3)$ that they form a decomposition of $\modt$. 

    The first point, $(1)$, follows from the fact that $\Mod_{E_*, \phi}$ is a Serre subcategory, and being $I_n$-power torsion is a property closed under subobjects, quotients, and extensions. Hence also $\Mod_{E_*, \phi}^{I_n-tors}$ is a Serre subcategory. 

    For $(2)$, we note that we have a diagram of adjoint functors 
    \begin{center}
        \begin{tikzcd}
            \modE \arrow[r, "{[1]}", yshift=2] \arrow[d, "T_{I_n}^{E_*}", xshift=2] & \modE \arrow[l, yshift=-2] \arrow[d, "T_{I_n}^{E_*}", xshift=2] \\
            \modt \arrow[r, "{[1]}", yshift=2] \arrow[u, "i", xshift=-2] & \modt \arrow[l, yshift=-2] \arrow[u, "i", xshift=-2]
        \end{tikzcd}
    \end{center}
    which is commutative from bottom left to top right. Here $[1]$ denotes the local grading on $\modt$. We want the diagram to commute from top left to bottom right, which can be obtained by the dual Beck-Chevalley condition. This reduces to checking $[-1]\circ i \simeq i\circ [-1]$, which is true due to the commutativity and the fact that $[1]$ and $[-1]$ are autoequivalences. Hence we have $[1]\circ T_{I_n}^{E_*} \simeq T_{I-n}^{E_*}\circ [1]$. In fact, the diagram is commutative in all possible directions. This means that for any $I_n$-power torsion $E_*$-module $M$ of pure weight $\phi$, we have 
    $$[k]M \cong [k]T_{I_n}^{E_*}M \cong T_{I_n}^{E_*}[k]N \in \Mod_{E_*, \phi + k \mod 2p-2}^{I_n-tors}$$
    as $[k]M\in \Mod_{E_*, \phi+k \mod 2p-2}$. 
    
    For the final point $(3)$, note that any subcategory of a product category is a product of subcategories. Hence, the $\modt$ splits as a product of the pure weight components. In particular, the functor 
    $$\prod_{\phi \in \Z/(2p-2)}\Mod_{E_*, \phi}^{I_n-tors}\longrightarrow \modt$$
    defined by $(M_\phi) \longmapsto \bigoplus_\phi M_\phi$ is an equivalence of categories. 
\end{proof}

\begin{remark}
    This is the part where it was important we chose a version of Morava $E$-theory that is concentrated in degrees divisible by $2p-2$. If we instead chose a $2$-periodic $E$-theory, for example $E_n$, then neither $\modE$ nor $\modt$ would have a splitting of the above degree. 
\end{remark}

We can now summarize the above discussion with the first of our main results. 

\begin{theorem}[\cref{thm:C}]
    \label{thm:main-modules}
    Let $p$ be a prime, $n$ a natural number, and $E$ a version of height $n$ Morava $E$-theory concentrated in degrees divisible by $2p-2$. If $k=2p-2-n>0$, then the functor 
    $$\pi_*\colon \Modt\longrightarrow\modt$$
    is a $k$-exotic homology theory, giving an equivalence 
    $$h_k \Modt \simeq h_k \Dp(\modt).$$
    In particular, monochromatic $E$-modules are exotically algebraic at large primes. 
\end{theorem}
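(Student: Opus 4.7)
The plan is to reduce the statement to a direct application of Franke's algebraicity theorem (Theorem \ref{thm:franke-algebraicity}) by verifying that $\pi_*\colon \Modt \to \modt$ is a $k$-exotic homology theory in the sense of Definition \ref{def:k-exotic-homology-theory}. Thus the substance of the argument lies in the three preparatory lemmas already established; what remains at this stage is an assembly and a numerical check.

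The four ingredients demanded of a $k$-exotic homology theory---conservativity, adaptedness, finite cohomological dimension of the target, and a splitting whose order exceeds that dimension by $k$---are supplied respectively by the three lemmas. Specifically, Lemma \ref{lm:conservative-adapted-torsion-modules} provides that $\pi_*\colon \Modt \to \modt$ is a conservative adapted homology theory. Lemma \ref{lm:cohomological-dimension-torsion-modules} shows that $\modt$ has cohomological dimension exactly $n$. Lemma \ref{lm:splitting-torsion-modules} produces a splitting of $\modt$ of order $2p-2$, transported along $T_{I_n}^{E_*}$ from the splitting on $\modE$ induced by the concentration of $\pi_* E$ in degrees divisible by $2p-2$. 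The difference between the splitting order and the cohomological dimension is $(2p-2)-n = k$, which is positive by hypothesis, so $\pi_*$ is indeed $k$-exotic. Theorem \ref{thm:franke-algebraicity} then yields the desired equivalence $h_k \Modt \simeq h_k D^{per}(\modt)$. The final clause that $\Modt$ is exotically algebraic follows because for $n\geq 1$ the analogous equivalence cannot be upgraded to an $\infty$-categorical equivalence, by the same $H\mathbb{Z}$-linearity obstruction invoked in the discussion following Example \ref{ex:chromatic-algebraicity}.

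If I were constructing the proof from scratch rather than assembling it, the final step would be essentially free, and the main obstacle would instead lie in the three preparatory lemmas, most notably in the shift-invariance portion of Lemma \ref{lm:splitting-torsion-modules}. Verifying that the pure-weight components descend from $\modE$ to $\modt$ as Serre subcategories is formal once one observes closure of the torsion condition under subobjects, quotients, and extensions; the identification of the direct-sum decomposition is also automatic. The delicate point is the Beck--Chevalley-type commutativity between the torsion functor $T_{I_n}^{E_*}$ and the grading shift $[1]$, which is needed to ensure that each pure-weight component is stable under $[k]$. Apart from this, the arithmetic $k = 2p-2-n$ and the invocation of Franke's theorem make the theorem a clean corollary of the setup.
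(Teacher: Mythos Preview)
Your proposal is correct and follows essentially the same approach as the paper: assemble Lemmas \ref{lm:conservative-adapted-torsion-modules}, \ref{lm:cohomological-dimension-torsion-modules}, and \ref{lm:splitting-torsion-modules} to verify that $\pi_*$ is $k$-exotic, then invoke Theorem \ref{thm:franke-algebraicity}. The paper's proof is exactly this assembly (in slightly terser form and without the justification of the ``exotically'' qualifier, which you add via the $H\mathbb{Z}$-linearity obstruction).
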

\begin{proof}
    By \cref{lm:cohomological-dimension-torsion-modules} the cohomological dimension of $\modt$ is $n$, and by \cref{lm:splitting-torsion-modules} we have a splitting on $\modt$ of order $2p-2$. Hence, by \cref{lm:conservative-adapted-torsion-modules} the functor $$\pi_*\colon \Modt \longrightarrow \modt$$
    is a $k$-exotic homology theory for $k=2p-2-n>0$, which gives an equivalence 
    $$h_k \Modt \simeq h_k D^{per}(\modt)$$
    by \cref{thm:franke-algebraicity}.
\end{proof}

We can also phrase this dually in terms of $K_p(n)$-local $E$-modules. 

\begin{corollary}
    \label{cor:main-modules-dual}
    Let $p$ be a prime and $n$ a positive integer. Let further $K_p(n)$ be the height $n$ Morava $K$-theory at the prime $p$ and $E$ be a height $n$ Morava $E$-theory at $p$ concentrated in degrees divisible by $2p-2$. If $k=2p-2-n>0$, then we have a $k$-exotic algebraic equivalence 
    $$h_k L_{K_p(n)}\ModE \simeq h_k D^{per}(\modE)^{I_n-comp}.$$ 
    In particular, $K_p(n)$-local $E$-modules are exotically algebraic at large primes. 
\end{corollary}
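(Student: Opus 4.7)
The plan is to deduce this corollary from \cref{thm:main-modules} by applying local duality to both sides of the equivalence $h_k \Modt \simeq h_k D^{per}(\modt)$. On the topological side, \cref{rm:local-duality-modules} already provides an equivalence $\Modt \simeq L_{K_p(n)}\ModE$, and since this is an equivalence of $\infty$-categories it descends directly to an equivalence of homotopy $k$-categories.

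On the algebraic side, the goal is to establish an equivalence $D^{per}(\modt) \simeq D^{per}(\modE)^{I_n-comp}$. The idea is to apply \cref{thm:pulling-out-torsion} to the discrete Hopf algebroid $(E_*, E_*)$ with the finitely generated invariant regular ideal $I_n = (p, v_1, \dots, v_{n-1})$; this is an Adams Hopf algebroid, so the hypotheses are satisfied. The theorem then yields
\[D^{per}(\modt) \simeq [D^{per}(\modE)]^{I_n-tors}.\]
Next, I would invoke abstract local duality (\cref{thm:local-duality}) in the form packaged by \cref{const:periodic-derived-local-duality}, which gives $(D^{per}(\modE), P(E_*/I_n))$ as a local duality context with a symmetric monoidal equivalence $\Lambda_{I_n}^{E_*}\colon D^{per}(\modE)^{I_n-tors} \xrightarrow{\simeq} D^{per}(\modE)^{I_n-comp}$.

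Chaining these equivalences together produces the desired string
\[h_k L_{K_p(n)}\ModE \simeq h_k \Modt \simeq h_k D^{per}(\modt) \simeq h_k [D^{per}(\modE)]^{I_n-tors} \simeq h_k D^{per}(\modE)^{I_n-comp},\]
where the middle equivalence is \cref{thm:main-modules} and the remaining three are equivalences of $\infty$-categories that pass through truncation without issue.

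Most of the work is formal, since the two key computational inputs (\cref{thm:main-modules} for $\Modt$ and \cref{thm:pulling-out-torsion} for moving torsion past periodization) are already established. The only conceptual subtlety is the dualization itself: the abelian category $\Mod_{E_*}^{I_n-comp}$ of derived complete $E_*$-modules is far less tractable than $\modt$, so working at the level of the torsion side and only transferring to the complete side after taking the periodic derived category is essential; attempting to prove algebraicity directly for $L_{K_p(n)}\ModE$ via a homology theory into derived complete modules would fail for exactly the reasons discussed in the remark preceding \cref{def:faithful-lift}.
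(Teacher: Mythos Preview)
Your proposal is correct and follows essentially the same approach as the paper: the paper's proof is precisely the chain of equivalences you wrote, invoking \cref{rm:local-duality-modules}, \cref{thm:main-modules}, \cref{thm:pulling-out-torsion} (for the discrete Hopf algebroid $(E_*,E_*)$), and \cref{const:periodic-derived-local-duality} in that order. Your additional remarks on why one must work on the torsion side rather than directly with derived complete modules are apt but not part of the formal argument.
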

\begin{proof}
    The equivalence is constructed from the equivalences obtained from \cref{rm:local-duality-modules}, \cref{thm:main-modules}, \cref{thm:pulling-out-torsion} and \cref{const:periodic-derived-local-duality}. In particular, we have
    \begin{align*}
        h_k \Modc
        &\hspace{2pt}\overset{\ref{rm:local-duality-modules}}\simeq 
        h_k \hspace{2pt} \Modt \\
        &\hspace{2pt}\overset{\ref{thm:main-modules}}\simeq 
        \hspace{2pt} h_k D^{per}(\modt) \\
        &\overset{\ref{thm:pulling-out-torsion}}\simeq
        h_k D^{per}(\modE)^{I_n-tors} \\
        &\overset{\ref{const:periodic-derived-local-duality}}\simeq 
        h_k D^{per}(\modE)^{I_n-comp},
    \end{align*}
    where we have used that an equivalence of $\infty$-categories induces an equivalence on homotopy $k$-categories.
\end{proof}

Now, let $HE_*$ be the Eilenberg-MacLane spectrum of $E_*$. By Schwede's derived morita theory, see \cite[7.1.1.16]{Lurie_HA}, there is a symmetric monoidal equivalence $D(E_*)\simeq \Mod_{HE_*}$, and we can form a local duality diagram for $\Mod_{HE_*}$ corresponding to \cref{ex:local-duality-comod} for the discrete Hopf algebroid $(E_*, E_*)$. By arguments similar to \cref{lm:monochromatic-iff-torsion-modules} and \cref{lm:conservative-adapted-torsion-modules} one can show that the homotopy groups functor $\pi_*\colon \Mod_{HE_*}\longrightarrow \ModE$ restricts to a conservative adapted homology theory 
$$\pi_* \Mod_{HE_*}^{I_n-tors}\longrightarrow \modt.$$
In the same range as \cref{thm:main-modules} this is then automatically also $k$-exotic. We can then combine the algebraicity for $\Modt$ and $\Mod_{HE_*}$ to get the following statement. 

\begin{corollary}
    Let $k=2p-2-n>0$. Then, there is an exotic equivalence 
    $$h_k \Modt \simeq h_k \Mod_{HE_*}^{I_n-tors}.$$
\end{corollary}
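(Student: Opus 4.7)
The plan is to apply Franke's algebraicity theorem (\cref{thm:franke-algebraicity}) to two different $k$-exotic homology theories that share the same target abelian category $\modt$, and then zig-zag through the common algebraic model $D^{per}(\modt)$. The first equivalence $h_k \Modt \simeq h_k D^{per}(\modt)$ is precisely the content of \cref{thm:main-modules}. For the second, I will use the assertion sketched in the paragraph preceding the corollary: that the restricted homotopy-group functor $\pi_*\colon \Mod_{HE_*}^{I_n-tors}\to \modt$ is itself a $k$-exotic homology theory in the same range $k = 2p-2-n > 0$, which by \cref{thm:franke-algebraicity} produces the companion equivalence $h_k \Mod_{HE_*}^{I_n-tors}\simeq h_k D^{per}(\modt)$.

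Two of the three defining conditions of being $k$-exotic depend only on the target abelian category $\modt$, namely the cohomological dimension bound from \cref{lm:cohomological-dimension-torsion-modules} and the splitting of order $2p-2$ from \cref{lm:splitting-torsion-modules}, both of which apply verbatim. So all that needs to be checked is that the restricted functor $\pi_*\colon \Mod_{HE_*}^{I_n-tors}\to \modt$ is conservative and adapted. Here one invokes Schwede's derived Morita equivalence $\Mod_{HE_*}\simeq D(E_*)$ together with the local duality on $D(E_*)$ from \cref{ex:local-duality-comod} applied to the discrete Hopf algebroid $(E_*, E_*)$; under this correspondence, torsion is detected on homotopy groups by a direct analogue of \cref{lm:monochromatic-iff-torsion-modules}, and faithful lifts of torsion injectives transport from $\Mod_{HE_*}$ to $\Mod_{HE_*}^{I_n-tors}$ by exactly the argument of \cref{lm:conservative-adapted-torsion-modules}.

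Combining the two equivalences then yields
\[h_k \Modt \;\simeq\; h_k D^{per}(\modt) \;\simeq\; h_k \Mod_{HE_*}^{I_n-tors},\]
which is the asserted exotic equivalence. The only nontrivial input is the torsion-detection lemma in the $HE_*$-module setting, but this is a strictly simpler analogue of \cref{lm:monochromatic-iff-torsion-modules}, since the relevant local cohomology spectral sequence for the discrete Hopf algebroid $(E_*, E_*)$ reduces to ordinary local cohomology of $E_*$-modules with no comodule structure to track; no genuinely new obstacle arises beyond what was already handled in \cref{ssec:algebraicity-modules}.
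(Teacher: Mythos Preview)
Your proposal is correct and follows essentially the same approach as the paper: both apply \cref{thm:franke-algebraicity} to the restricted homology theory $\pi_*\colon \Mod_{HE_*}^{I_n-tors}\to \modt$ (whose $k$-exoticness is established exactly as you outline, with the target-dependent conditions inherited from \cref{lm:cohomological-dimension-torsion-modules} and \cref{lm:splitting-torsion-modules}) and then zig-zag through the common algebraic model $D^{per}(\modt)$ using \cref{thm:main-modules}. The paper's argument is the paragraph immediately preceding the corollary, and your write-up faithfully expands it.
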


\subsection{Monochromatic spectra}
\label{ssec:algebraicity-spectra}

Having proven that monochromatic $E$-modules are algebraic at large primes, we now turn to the larger category of all monochromatic spectra $\M\np$ with the same goal. The strategy is exactly the same as in \cref{ssec:algebraicity-modules}: we first prove that the conservative adapted homology theory $E_*\colon \sp\np\longrightarrow \comod\EE$ restricts to a conservative adapted homology theory on $\M\np$, before proving that $\Comod\EE^{I_n-tors}$ has a splitting and finite cohomological dimension. This will prove \cref{thm:B}, which we then convert into a proof of \cref{thm:A}, as in \cref{cor:main-modules-dual}.

\begin{lemma}
    \label{lm:monochromatic-iff-torsion-comodules}
    If $X$ is a $E$-local spectrum, then $X\in \M\np$ if and only if $E_*X\in \comodt$. 
\end{lemma}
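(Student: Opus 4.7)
The plan is to mirror the proof of \cref{lm:monochromatic-iff-torsion-modules}, replacing homotopy groups with $E$-homology and working in $\comod\EE$ in place of $\modE$. The central input will be a Greenlees--May/Barthel--Heard--Valenzuela style local cohomology spectral sequence in $\comod\EE$ of the form
\[
E_2^{s,t} = (H_{I_n}^{-s,\Psi} E_*X)_t \Longrightarrow E_{s+t} M_n X,
\]
where $H_{I_n}^{*,\Psi}$ denotes local cohomology computed in $\comod\EE$ associated to the local duality context of \cref{ex:local-duality-comod}. This can be produced by applying $E_*(-)$ to the Koszul-type resolution of $M_n X \simeq \Gamma_{I_n}^{\mathrm{top}} X$ (\cref{ex:local-duality-chromatic}) built from the generators $p, v_1, \ldots, v_{n-1}$ of $I_n$, and identifying the resulting $E_2$-page with algebraic local cohomology via \cref{rm:torsion-iff-underlying-is-torsion}.

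For the forward direction, suppose $X \in \M\np$, so that the monochromatization map $\phi\colon M_n X \to X$ is an equivalence. The $E_2$-page consists of $I_n$-power torsion comodules, since local cohomology is always torsion, and by Grothendieck vanishing together with \cref{rm:torsion-iff-underlying-is-torsion} it is supported in finitely many columns $-n \leq s \leq 0$. Since $\comodt$ is abelian and closed under subobjects, quotients, and extensions inside $\comod\EE$ (\cref{rm:torsion-comodules-grothendieck-monoidal}), the successive pages and therefore the abutment $E_* X \cong E_* M_n X$ are all $I_n$-power torsion comodules. For the reverse direction, suppose $E_* X \in \comodt$. Then $H^{0,\Psi}_{I_n} E_*X \cong E_*X$ and $H^{s,\Psi}_{I_n} E_*X \cong 0$ for $s > 0$, so the spectral sequence degenerates to an edge isomorphism $E_* M_n X \xrightarrow{\cong} E_* X$ realized by $\phi$. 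Since $E_*\colon \sp\np \to \comod\EE$ is a conservative homology theory, $\phi$ is already an equivalence in $\sp\np$, so $X \in \M\np$.

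The main obstacle is establishing the local cohomology spectral sequence in the comodule setting with the correct algebraic identification of the $E_2$-page; once this is in hand, the remainder is a routine transcription of the argument for \cref{lm:monochromatic-iff-torsion-modules}, relying only on Grothendieck vanishing, the closure properties of $\comodt$ recorded in \cref{rm:torsion-comodules-grothendieck-monoidal}, and conservativity of $E_*$ on $\sp\np$.
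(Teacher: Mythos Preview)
Your approach is correct but works harder than necessary. The paper does not rebuild the local cohomology spectral sequence in the comodule setting; instead it reduces directly to the module-level result \cref{lm:monochromatic-iff-torsion-modules} via the observation that $M_n$ is smashing. Concretely, for $X\in\M\np$ one has $E\otimes X\simeq E\otimes M_nX\simeq M_n(E\otimes X)$, so $E\otimes X$ lies in $\Modt$, and then \cref{lm:monochromatic-iff-torsion-modules} together with \cref{rm:torsion-iff-underlying-is-torsion} gives that $E_*X$ is $I_n$-power torsion as a comodule. For the converse, the paper argues exactly as you do: the map $E\otimes M_nX\to E\otimes X$ is an equivalence (again by the smashing identification and \cref{lm:monochromatic-iff-torsion-modules} applied to the $E$-module $E\otimes X$), and conservativity of $E_*$ on $\sp\np$ finishes.

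The upshot is that the ``main obstacle'' you flag---constructing and identifying the comodule local cohomology spectral sequence---evaporates once you note that $E\otimes M_nX\simeq M_n(E\otimes X)$, since then the spectral sequence you want is literally the module-level one of \cite[3.19]{barthel-heard-valenzuela_2018} applied to $E\otimes X$. Your route would certainly work, and has the conceptual virtue of staying entirely inside $\comod\EE$, but the paper's reduction is a two-line shortcut that reuses \cref{lm:monochromatic-iff-torsion-modules} wholesale rather than rerunning its proof.
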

\begin{proof}
    Assume first that $X\in \M\np$. We have $E\otimes X\in \ModE^{I_n-tors}$ as
    $$E\otimes X\simeq E\otimes M_n X\simeq M_n E\otimes X,$$
    where the last equivalence follows from $M_n$ being smashing. In particular, the restricted functor $E_*\colon \M\np\longrightarrow \comod\EE$ factors through $\ModE^{I_n-tors}$. By \cref{lm:monochromatic-iff-torsion-modules} and \cref{rm:torsion-iff-underlying-is-torsion} this means that $E_*X$ is an $I_n$-power torsion $E_*E$-comodule. 

    For the converse, assume that we have $X\in \sp\np$ such that $E_*X\in \comodt$. Using the monochromatization functor we obtain a comparison map $M_n X\longrightarrow X$, which induces a map on $E$-modules $E\otimes M_n X\longrightarrow E\otimes X$. This map is an isomorphism on homotopy groups, as $E_*X$ was assumed to be $I_n$-power torsion. As $E_*$ is conservative on $\sp\np$, the original comparison map $M_n X\longrightarrow X$ was an isomorphism, meaning that $X\in \M\np$. 
\end{proof}

\begin{lemma}
    \label{lm:conservative-adapted-torsion-comodules}
    Let $p$ be a prime and $n$ a natural number. Then, the functor 
    $$E_*\colon \M\np\longrightarrow \comodt$$
    is a conservative adapted homology theory. 
\end{lemma}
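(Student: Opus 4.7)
The plan is to mirror the strategy used in \cref{lm:conservative-adapted-torsion-modules} and reduce the statement to the already-established fact that $E_*\colon \sp\np\longrightarrow \comod\EE$ is a conservative adapted homology theory. The restriction of $E_*$ to $\M\np$ indeed lands in $\comodt$ by \cref{lm:monochromatic-iff-torsion-comodules}, so the bulk of the work is checking (a) that we still have a conservative homology theory after restriction and (b) that faithful lifts of injectives can be chosen to lie in $\M\np$.

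For (a), first I would observe that $\M\np$ is a localizing subcategory of $\sp\np$ (it is the essential image of the smashing colocalization $M_n$), hence a stable $\infty$-subcategory closed under cofibers and suspensions. Consequently the axioms of a homology theory (additivity, exactness on cofiber sequences, compatibility with the local grading) are automatically inherited by the restriction from $E_*$ on $\sp\np$. Conservativity is likewise immediate, since the parent functor detects equivalences and $\M\np \hookrightarrow \sp\np$ is fully faithful.

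For (b), the key input is a description of the injective objects of $\comodt$ analogous to the one used in the module case. By \cite[3.16]{barthel-heard-valenzuela_2020}, any injective $J\in \comodt$ may be taken to be of the form $T^{E_*E}_{I_n}Q$ for some injective $Q\in \comod\EE$. Since $E_*\colon \sp\np\longrightarrow \comod\EE$ is adapted, we may choose a faithful lift $\bar{J}\in \sp\np$ with $E_*\bar{J}\cong J$. Because $E_*\bar{J}$ is then $I_n$-power torsion, \cref{lm:monochromatic-iff-torsion-comodules} yields $\bar{J}\in \M\np$, and the defining representability equivalence $[X,\bar{J}]\cong \Hom_{\comod\EE}(E_*X, J)$ restricts to the same representability statement in $\comodt$ for all $X\in \M\np$. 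This produces the required faithful injective lifts and completes the verification that the restricted functor is adapted.

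The only subtle point is making sure that the lift $\bar{J}$ produced in $\sp\np$ is automatically in $\M\np$, so that no additional modification is needed; this is exactly where \cref{lm:monochromatic-iff-torsion-comodules} does the heavy lifting, and I expect it to be the main obstacle to a completely mechanical argument. Everything else is a straightforward transfer of structure along the inclusion $\M\np\hookrightarrow \sp\np$.
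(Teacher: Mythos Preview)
Your approach is essentially the same as the paper's, and the overall shape of the argument is correct. There is, however, one step you have glossed over that the paper makes explicit. When you write ``since $E_*\colon \sp\np\longrightarrow \comod\EE$ is adapted, we may choose a faithful lift $\bar{J}\in \sp\np$ with $E_*\bar{J}\cong J$'', you are implicitly using that $J$ is injective in the \emph{full} category $\comod\EE$, not merely in $\comodt$. Knowing only that $J=T^{E_*E}_{I_n}Q$ with $Q$ injective in $\comod\EE$ does not immediately give this: you must check that $I_n$-power torsion of an injective comodule remains injective as a comodule. The paper fills this in by writing $J\cong E_*E\otimes_{E_*}T^{E_*}_{I_n}Q$ (torsion is detected on the underlying module), invoking the module-level fact from \cref{lm:conservative-adapted-torsion-modules} that $T^{E_*}_{I_n}Q$ is injective in $\Mod_{E_*}$, and then observing that extended comodules on injective modules are injective in $\comod\EE$.

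So the ``only subtle point'' you flag---that the lift lands in $\M\np$---is in fact the second of two subtleties; the first is ensuring the lift exists at all. Once you insert that injectivity verification, your argument matches the paper's proof exactly.
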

\begin{proof}
    First note that the image of the functor $E_*\colon \sp\np\longrightarrow \comod\EE$ restricted to $\M\np$ is contained in $\comodt$ by \cref{lm:monochromatic-iff-torsion-comodules}. The functor $E_*\colon \M\np\longrightarrow \comodt$ is then automatically a conservative homology theory. The category $\comodt$ has enough injectives as it is Grothendieck by \cref{rm:torsion-comodules-grothendieck-monoidal}. Hence, it only remains to prove that we have faithful lifts for all injective objects. 

    Let $J$ be an injective in $\comodt$. Following \cite[3.16]{barthel-heard-valenzuela_2020} we can assume that $J = T^{E_*E}_{I_n}(E_*E\otimes_{E_*}Q)$ for some injective $E_*$-module $Q$. Since being torsion is a property of the underlying module, and the forgetful functor $\epsilon_*$ is conservative, we have an isomorphism $T^{E_*E}_{I_n}(E_*E\otimes_{E_*}Q)\cong E_*E\otimes_{E_*}T_{I_n}^{E_*}Q.$ By the proof of \cref{lm:conservative-adapted-torsion-modules}, $T_{I_n}^{E_*}Q$ is injective in $\Mod_{E_*}$, hence $J$ is also injective in $\Comod\EE$.  

    Now, $E_*$ has faithful injective lifts from $\comod\EE$ to $\sp\np$, hence there is a lift $\bar{J}$ such that $[X,\bar{J}]\simeq \Hom_{E_*E}(E_*X, J)$ and $E_*\bar{J}\simeq J$. By \cref{lm:monochromatic-iff-torsion-comodules} $\bar{J}\in \M\np$ as $J$ was assumed to be $I_n$-power torsion, hence we have found our faithful injective lift. 
\end{proof}

\begin{lemma}
    \label{lm:cohomological-dimension-torsion-comodules}
    Let $p$ be a prime and $n$ a natrural number. If $p-1\nmid n$, then the category $\comodt$ has cohomological dimension $n^2+n$. 
\end{lemma}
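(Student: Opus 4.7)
The strategy parallels the module-level argument in \cref{lm:cohomological-dimension-torsion-modules}: I would establish $n^2+n$ as both an upper and a lower bound on the cohomological dimension of $\comodt$.

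For the upper bound, I would first observe that for any pair of torsion comodules $M$ and $N$, the $\Ext$-groups taken in $\comodt$ agree with those taken in the ambient category $\comod\EE$. This follows from \cref{lm:derived-torsion-if-homology-torsion}, since
\[
\Ext^s_{\comodt}(M,N) \cong \Hom_{D(\comodt)}(M,N[s]) \cong \Hom_{D(\comod\EE)}(M,N[s]) \cong \Ext^s_{\comod\EE}(M,N),
\]
where the middle equivalence uses that $D(\comodt) \simeq D(\comod\EE)^{I_n-tors}$ sits fully faithfully inside $D(\comod\EE)$. It then suffices to know that $\comod\EE$ itself has cohomological dimension $n^2+n$ whenever $p-1 \nmid n$; this refines \cref{ex:cohomological-dimension-comodEE} and is proved by identifying the relevant $\Ext$-groups with the continuous cohomology of the Morava stabilizer group $G_n$, which has cohomological dimension $n^2$ precisely under this arithmetic hypothesis, plus the $n$-dimensional contribution from the local cohomology of $E_*$.

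For the lower bound, I would exhibit two specific torsion comodules with non-vanishing $\Ext^{n^2+n}$. The natural candidates, mirroring the module-level construction, are $E_*/I_n^\infty \cong H^n_{I_n}(E_*)$ and $E_*/I_n$, both of which carry canonical $E_*E$-comodule structures via the invariance of the ideals $I_n^k$. The goal is to show
\[
\Ext^{n^2+n}_{E_*E}(E_*/I_n^\infty,\, E_*/I_n) \neq 0,
\]
which one can attack by combining the module computation $\Ext^n_{E_*}(E_*/I_n^\infty, E_*/I_n) \cong E_*/I_n$ from the proof of \cref{lm:cohomological-dimension-torsion-modules} with a top-degree non-vanishing class in $H^{n^2}_c(G_n; E_*/I_n)$, via a Cartan--Eilenberg-type spectral sequence converting group cohomology into comodule $\Ext$.

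The hard part is precisely the lower bound, and this is where the arithmetic condition $p-1 \nmid n$ is essential: one must both know that the continuous cohomology of $G_n$ attains dimension exactly $n^2$ under this hypothesis, and check that the resulting top-degree class actually survives the spectral sequence assembling comodule cohomology from its module and group-cohomological pieces. As flagged in the Acknowledgements, an earlier version of this paper contained a mistake in exactly this top-degree computation, which strongly suggests that the honest verification of the non-vanishing class, rather than the upper-bound book-keeping, is the genuinely subtle ingredient.
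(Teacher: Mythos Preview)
Your upper-bound argument has a genuine gap. You propose to reduce to the cohomological dimension of the ambient category $\comod\EE$, asserting that it equals $n^2+n$ whenever $p-1\nmid n$. But this is false: as the paper notes in the remark immediately following the proof (citing \cite[2.6]{pstragowski_2021}), $\comod\EE$ has \emph{infinite} cohomological dimension whenever $p-1\leq n$, irrespective of divisibility. For example, at $p=3$, $n=3$ one has $p-1=2\nmid 3$, so your hypothesis is satisfied, yet $\comod\EE$ is infinite-dimensional while $\comodt$ is claimed to have dimension $12$. The entire point of the lemma is that the torsion subcategory is strictly better behaved than the ambient one in this range, so you cannot bootstrap from the latter.

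The paper's argument avoids this by never invoking the global dimension of $\comod\EE$. It works directly with torsion comodules: by the Landweber filtration theorem, every finitely presented $I_n$-torsion comodule is filtered with subquotients that are shifts of $E_*/I_n$ only (no $E_*/I_j$ with $j<n$ appears, which is exactly what fails in the non-torsion case). One then only needs Morava's vanishing theorem $\Ext^{s}_{E_*E}(E_*,E_*/I_n)=0$ for $s>n^2$, which holds precisely when $p-1\nmid n$; short exact sequences for the $v_i$ push this up to $\Ext^{s}_{E_*E}(E_*/I_n,E_*/I_n)=0$ for $s>n^2+n$, and the filtration plus \cref{lm:torsion-comodules-generated-by-compacts} propagate the bound to all torsion comodules. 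So the hypothesis $p-1\nmid n$ enters in the \emph{upper} bound, not the lower one.

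Finally, the paper does not establish a lower bound at all; only the upper bound is needed for \cref{thm:main-spectra}. Your spectral-sequence survival argument is therefore unnecessary, and your reading of the Acknowledgements is inverted: the earlier error (a claimed dimension of $n^2$ rather than $n^2+n$) was an overly optimistic upper bound, not a faulty lower-bound computation.
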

\begin{proof}
    The proof follows \cite[2.5]{pstragowski_2021} closely, which is itself a modern reformulation of \cite[3.4.3.9]{franke_96}. As in \cref{lm:cohomological-dimension-torsion-modules} we note that also $\Ext$-groups in $\comodt$ are computed in $\comod\EE$. We start by defining {\defn good targets} to be $I_n$-power torsion comodules $N$ such that $\Ext_{E_*E}^{s,t}(E_*/I_n, N)=0$ for all $s>n^2+n$ and {\defn good sources} to be $I_n$-power torsion comodules $M$ such that $\Ext_{E_*E}^{s,t}(M,N)=0$ for all $s>n^2+n$ and $I_n$-torsion comodules $N$. 

    By the Landweber filtration theorem, see for example \cite[5.7]{hovey-strickland_99}, we know that any finitely presented comodule $M$ has a finite filtration 
    $$0=M_0 \subseteq M_1 \subseteq \cdots \subseteq M_{s-1}\subseteq M_s=M,$$
    where $M_r/M_{r-1} \cong E_*/I_{j_r}[t_r]$ and $j_r\leq n$. When $M$ is $I_n$-power torsion we get $j_r=n$ for all $r$, as noted in \cite[4.3]{hovey-strickland_99}. By Morava's vanishing theorem, see for example \cite[6.2.10]{ravenel_86}, we have $\Ext_{E_*E}^{s,t}(E_*, E_*/I_n) = 0$ for all $s>n^2$. By inductively using the short exact sequences
    \[0\longrightarrow E_* \overset{v_i}\longrightarrow E_* \longrightarrow E_*/v_i\longrightarrow 0\]
    for $0\leq i\leq n$ and the induced long exact sequence in $\Ext$-groups, we get that 
    \[\Ext_{E_*E}^{s,t}(E_*/I_n, E_*/I_n) = 0\] 
    for $s>n^2+n$. By using the Landweber filtration, this implies that any finitely presented $I_n$-power torsion comodule is a good target. By \cref{lm:torsion-comodules-generated-by-compacts} any $I_n$-power torsion comodule is a filtered colimit of finitely presented ones, and as $\Ext_{E_*E}^{s,t}(E_*/I_n, -)$ commutes with colimits this implies that any $I_n$-power torsion comodule is a good target. 

    Note that the above argument also proves that $E_*/I_n$ is a good source, which by the Landweber filtration argument implies that any finitely presented $I_n$-torsion comodule is a good source. By \cref{lm:torsion-comodules-generated-by-compacts}, the category $\comodt$ is generated under filtered colimits by finitely presented ones. Hence, we can apply \cite[2.4]{pstragowski_2021} to any injective resolution 
    $$0\longrightarrow M \longrightarrow J_0 \longrightarrow J_1\longrightarrow \cdots$$
    to get that the map $J_{n^2+n}\longrightarrow \ima(J_{n^2+n}\longrightarrow J_{n^2+n+1})$ is a split surjection, and that the object $\ima(J_{n^2+n}\longrightarrow J_{n^2+n+1})$ is injective. Hence, any injective resolution can be modified to have length $n^2+n$, which concludes the proof. 
\end{proof}

\begin{remark}
    In a previous version of this paper, we claimed that the cohomological dimension was $n^2$. We want to thank Piotr Pstr\a{}gowski for pointing out the gap in the proof. This means that $\Comod\EE^{I_n-tors}$ has the same cohomological dimension as the non-torsion category $\comod\EE$, as seen in \cref{ex:cohomological-dimension-comodEE}. However, we do obtain something slightly stronger, as our result holds for all $p-1\nmid n$, while the analogue in $\Comod\EE$ only holds when $p-1>n$. In fact, $\Comod\EE$ does not have finite cohomological dimension when $p-1\leq n$, as noted in \cite[2.6]{pstragowski_2021}. 
\end{remark}

\begin{lemma}
    \label{lm:splitting-torsion-comodules}
    Let $p$ be a prime, $n$ a natural number, and $E$ any height $n$ Morava $E$-theory. Then, the category $\comodt$ has a splitting of order $2p-2$. 
\end{lemma}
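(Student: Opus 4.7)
The plan is to imitate the argument of \cref{lm:splitting-torsion-modules} almost verbatim, replacing the discrete Hopf algebroid $(E_*, E_*)$ with the Adams Hopf algebroid $(E_*, E_*E)$ throughout. The starting point is that $\comod\EE$ already has a splitting of order $2p-2$ by \cref{const:splitting-of-comodules}, since $E_*$ is concentrated in degrees divisible by $2p-2$. Denote the pure weight $\phi$ component by $\Comod_{E_*E, \phi}$, and define $\Comod_{E_*E,\phi}^{I_n-tors}$ to be the essential image of $T_{I_n}^{E_*E}$ restricted to $\Comod_{E_*E,\phi}$. The claim is that these assemble into a splitting on $\comodt$, so we need to verify the three conditions of \cref{def:splitting-of-abelian-category}.

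For condition $(1)$, each $\Comod_{E_*E, \phi}^{I_n-tors}$ is a Serre subcategory, since $\Comod_{E_*E,\phi}$ is Serre in $\comod\EE$ and being $I_n$-power torsion is a property closed under subobjects, quotients, and extensions. For condition $(2)$, the shift invariance follows by exactly the Beck–Chevalley square argument from \cref{lm:splitting-torsion-modules}: the adjunction between $T_{I_n}^{E_*E}$ and the inclusion $i\colon \comodt \hookrightarrow \comod\EE$ is compatible with the local grading because $[1]$ is an autoequivalence, giving $[1]\circ T_{I_n}^{E_*E}\simeq T_{I_n}^{E_*E}\circ [1]$. Consequently, if $M$ has pure weight $\phi$ and is $I_n$-power torsion, then $[k]M$ is $I_n$-power torsion of pure weight $\phi+k \bmod 2p-2$.

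For condition $(3)$, the splitting on $\comod\EE$ produces, for every $I_n$-power torsion comodule $M$, a canonical decomposition $M\cong \bigoplus_\phi M_\phi$ with $M_\phi\in \Comod_{E_*E,\phi}$. Each summand $M_\phi$ is a subobject of $M$, hence $I_n$-power torsion, so $M_\phi\in \Comod_{E_*E,\phi}^{I_n-tors}$. This exhibits $\comodt$ as the product $\prod_{\phi\in \Z/(2p-2)}\Comod_{E_*E,\phi}^{I_n-tors}$ via $(M_\phi)\mapsto \bigoplus_\phi M_\phi$.

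I do not expect any real obstacle here: the main point is just to confirm that torsion and the weight decomposition on $\comod\EE$ are compatible, which follows because torsion is detected on underlying modules (\cref{rm:torsion-iff-underlying-is-torsion}) and the weight decomposition on comodules refines the one on modules, where the analogous statement was already established in \cref{lm:splitting-torsion-modules}. The only mild subtlety is ensuring the essential image description of the pure weight components coincides with "torsion comodules concentrated in degrees divisible by $\phi$," but this is immediate from the commuting square relating $T_{I_n}^{E_*E}$ with the inclusions of the pure weight subcategories.
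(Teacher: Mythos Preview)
Your argument for the three conditions is essentially the paper's: once $\comod\EE$ has a splitting of order $2p-2$, the induced splitting on $\comodt$ follows exactly as in \cref{lm:splitting-torsion-modules}, and the paper says precisely this.

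However, there is a genuine gap at the very first step. You assert that $E_*$ is concentrated in degrees divisible by $2p-2$, but the lemma is stated for \emph{any} height $n$ Morava $E$-theory, and this is false for $2$-periodic versions such as the Lubin--Tate theory $E_n$. Note that the standing assumption in \cref{ssec:algebraicity-modules} (that $E$ is concentrated in degrees divisible by $2p-2$) is explicitly dropped in \cref{ssec:algebraicity-spectra}. The paper handles this by first invoking \cite[4.2]{hovey-strickland_2005a}: all height $n$ Morava $E$-theories have equivalent categories of comodules, so one may transport the question to a version (e.g.\ $E(n)$ or $E_n^{h\F_p^\times}$) whose coefficients \emph{are} concentrated in degrees divisible by $2p-2$, and only then cite the splitting of $\comod\EE$ from \cite[8.13]{patchkoria-pstragowski_2021}. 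You should insert this reduction before appealing to \cref{const:splitting-of-comodules}.
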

\begin{proof}
    All of the height $n$ Morava $E$-theories have equivalent categories of comodules by \cite[4.2]{hovey-strickland_2005a}. Hence we can chose a version concentrated in degrees divisible by $2p-2$, giving $\comod\EE$ a splitting of order $2p-2$, see \cite[8.13]{patchkoria-pstragowski_2021}. The proof of the induced splitting on the $I_n$-torsion category is then identical to \cref{lm:splitting-torsion-modules}. 
\end{proof}

We can now summarize the above results with our second main result, which is the monochromatic analogue of \cref{ex:chromatic-algebraicity}. 

\begin{theorem}[\cref{thm:B}]
    \label{thm:main-spectra}
    Let $p$ be a prime, $n$ a natural number, and $E$ any height $n$ Morava $E$-theory. If $k=2p-2-n^2-n>0$, then the restricted functor $E_*\colon \M\np\longrightarrow \comodt$ is $k$-exotic. In particular, there is an equivalence 
    \begin{equation*}
        h_k \M\np \simeq h_kD^{per}(E_*E^{I_n-tors}),
    \end{equation*}
    meaning that monochromatic homotopy theory is exotically algebraic at large primes. 
\end{theorem}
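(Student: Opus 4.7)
The plan is to assemble the three structural facts established in the preceding lemmas and then invoke Franke's algebraicity machinery from \cref{thm:franke-algebraicity}. Recall that a homology theory is $k$-exotic in the sense of \cref{def:k-exotic-homology-theory} precisely when it is conservative, adapted, and its target has finite cohomological dimension $d$ together with a splitting of order $q+1$ satisfying $k = d+1-q > 0$. So the work is to check each ingredient for $E_*\colon \M\np \longrightarrow \comodt$, exactly parallel to the strategy used for monochromatic modules in \cref{thm:main-modules}.

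First, I would apply \cref{lm:conservative-adapted-torsion-comodules} to conclude that $E_*\colon \M\np \longrightarrow \comodt$ is a conservative adapted homology theory; the key input here is \cref{lm:monochromatic-iff-torsion-comodules}, which shows that the restriction of the usual $E_*$-homology to $\M\np$ does land in $\comodt$, together with the observation that faithful injective lifts of torsion injectives can be obtained from faithful lifts in $\comod\EE$ by noting that such lifts automatically lie in $\M\np$. Second, \cref{lm:cohomological-dimension-torsion-comodules} gives that $\comodt$ has cohomological dimension $n^2+n$ whenever $p-1 \nmid n$; since the hypothesis $2p-2 > n^2+n$ forces $p-1 > n$ (and in particular $p-1 \nmid n$), this hypothesis is satisfied. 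Third, \cref{lm:splitting-torsion-comodules} supplies a splitting of $\comodt$ of order $2p-2$, by transferring a splitting from $\comod\EE$ for a version of Morava $E$-theory concentrated in degrees divisible by $2p-2$, which is legitimate because the comodule categories of different height $n$ Morava $E$-theories are equivalent by Hovey--Strickland.

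With these three inputs in place, setting $d = n^2+n$ and $q+1 = 2p-2$ gives $k = d+1-q = (n^2+n)+1-(2p-3) = 2p-2-n^2-n$, which is positive by hypothesis. Therefore $E_*\colon \M\np \longrightarrow \comodt$ is a $k$-exotic homology theory in the sense of \cref{def:k-exotic-homology-theory}, and \cref{thm:franke-algebraicity} immediately yields the desired equivalence of homotopy $k$-categories
\[
h_k \M\np \;\simeq\; h_k D^{per}(\comodt).
\]

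There is no real main obstacle remaining, since the genuinely substantive work is precisely the three preceding lemmas: verifying the torsion characterization of monochromaticity (\cref{lm:monochromatic-iff-torsion-comodules}), bounding the cohomological dimension via a Landweber filtration and Morava vanishing argument (\cref{lm:cohomological-dimension-torsion-comodules}), and transporting the Franke splitting across the torsion functor (\cref{lm:splitting-torsion-comodules}). Once these are in hand, the theorem is a one-line assembly, with the most subtle bookkeeping simply being the verification that the numerical hypothesis $2p-2 > n^2+n$ lines up with both the cohomological dimension bound and the splitting order to produce the stated value of $k$.
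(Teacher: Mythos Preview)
Your proposal is correct and follows essentially the same approach as the paper's proof: cite \cref{lm:conservative-adapted-torsion-comodules}, \cref{lm:cohomological-dimension-torsion-comodules}, and \cref{lm:splitting-torsion-comodules} to verify the hypotheses of \cref{def:k-exotic-homology-theory}, then invoke \cref{thm:franke-algebraicity}. One cosmetic slip: your displayed computation $k = d+1-q = (n^2+n)+1-(2p-3)$ does not actually evaluate to $2p-2-n^2-n$; the intended formula (consistent with how the paper uses it in \cref{thm:main-modules} and here) is $k = (q+1)-d = (2p-2)-(n^2+n)$, so the stated definition appears to contain a typo that you have inherited.
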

\begin{proof}
    By \cref{lm:cohomological-dimension-torsion-comodules}, the cohomological dimension of $\comodt$ is $n^2+n$ and by \cref{lm:splitting-torsion-comodules} we have a splitting of order $2p-2$. The restricted functor $E_*$ is then by \cref{lm:conservative-adapted-torsion-comodules} $k$-exotic whenever $k=2p-2-n^2-n>0$, which by \cref{thm:franke-algebraicity} finishes the proof. 
\end{proof}

\begin{remark}
    \label{rm:monochromatic-corresponds-to-torsion}
    By \cref{thm:pulling-out-torsion} there is an equivalence $D^{per}(E_*E^{I_n-tors})\simeq \Fr\np^{I_n-tors}$ and by \cref{ex:local-duality-chromatic} there is an equivalence $\M\np\simeq \sp\np^{I_n-tors}$. This means that we can write the equivalence in \cref{thm:main-spectra} as 
    $$h_k\sp\np^{I_n-tors}\simeq h_k\Fr\np^{I_n-tors}$$
    for $k=2p-2-n^2-n>0$. This is more in line with thinking about \cref{thm:main-spectra} as ``coming from'' the chromatic algebraicity of \cref{ex:chromatic-algebraicity} on localizing ideals. This formulation is perhaps also easier to connect to the limiting case $p\to \infty$ as described using ultraproducts in \cite{barthel-schlank-stapleton_2021}, which can be stated informally as 
    $$\lim_{p\to \infty} \sp\np^{I_n-tors}\simeq \lim_{p\to \infty} \Fr\np^{I_n-tors}.$$
\end{remark}

Via \cref{thm:local-duality} we can now obtain the associated exotic algebraicity statement for the category of $K_p(n)$-local spectra.

\begin{theorem}[\cref{thm:A}]
    \label{thm:main-spectra-dual}
    Let $p$ be a prime and $n$ a natural number. Let further $K_p(n)$ be height $n$ Morava $K$-theory at the prime $p$ and $E$ be any height $n$ Morava $E$-theory at $p$. If $k=2p-2-n^2>0$, then we have a $k$-exotic algebraic equivalence 
    $$h_k \spK \simeq h_k \Fr\np^{I_n-comp}.$$ 
\end{theorem}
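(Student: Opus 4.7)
The plan is to deduce this statement from Theorem \ref{thm:main-spectra} by transporting the equivalence across the two local duality contexts at our disposal, exactly as in the proof of Corollary \ref{cor:main-modules-dual}. Since passing to homotopy $k$-categories preserves equivalences, it suffices to exhibit a chain of equivalences of $\infty$-categories (or at least equivalences of $k$-categories) connecting $\spK$ to $\Fr\np^{I_n-comp}$ via the torsion subcategories.

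First I would invoke Example \ref{ex:local-duality-chromatic}, which gives the classical Hovey--Strickland identification $\spK\simeq \M\np$ as the bottom edge of the local duality diagram for the context $(\sp\np, L_n F(n))$. This converts the left-hand side of the claim into $h_k \M\np$. Next, I would apply the main monochromatic result, Theorem \ref{thm:main-spectra}, which yields an equivalence of $k$-categories $h_k \M\np \simeq h_k D^{per}(E_*E^{I_n-tors})$. To reach the algebraic side of our target, I would then use Theorem \ref{thm:pulling-out-torsion} to rewrite $D^{per}(E_*E^{I_n-tors}) \simeq [D^{per}(E_*E)]^{I_n-tors} = \Fr\np^{I_n-tors}$. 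Finally, I would apply the local duality equivalence $\Gamma\colon \C^{\K-comp}\xrightarrow{\simeq} \C^{\K-tors}$ from Theorem \ref{thm:local-duality}, specialized in Construction \ref{const:periodic-derived-local-duality} to the periodic derived context $(D^{per}(E_*E), P(E_*/I_n))$, to obtain $\Fr\np^{I_n-tors}\simeq \Fr\np^{I_n-comp}$. Composing the four equivalences yields the desired $h_k \spK \simeq h_k \Fr\np^{I_n-comp}$.

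Concretely, the composite can be displayed as
\begin{align*}
    h_k \spK
    &\overset{\ref{ex:local-duality-chromatic}}{\simeq} h_k \M\np \\
    &\overset{\ref{thm:main-spectra}}{\simeq} h_k D^{per}(E_*E^{I_n-tors}) \\
    &\overset{\ref{thm:pulling-out-torsion}}{\simeq} h_k \Fr\np^{I_n-tors} \\
    &\overset{\ref{const:periodic-derived-local-duality}}{\simeq} h_k \Fr\np^{I_n-comp},
\end{align*}
which is exactly the formal structure already used in Corollary \ref{cor:main-modules-dual}.

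There is no real obstacle here: the entire argument is a book-keeping exercise that transports Theorem \ref{thm:main-spectra} through two symmetric monoidal local duality equivalences. All genuine content has already been absorbed into the preceding results, in particular the verification that $\pi_*$ and $E_*$ restrict to $k$-exotic homology theories on the torsion subcategories (Lemmas \ref{lm:conservative-adapted-torsion-modules}--\ref{lm:splitting-torsion-modules} and \ref{lm:conservative-adapted-torsion-comodules}--\ref{lm:splitting-torsion-comodules}), and the compatibility of periodization with torsion (Theorem \ref{thm:pulling-out-torsion}), which rests on the appendix result that Barr--Beck adjunctions interact well with local duality. The only mild subtlety to keep an eye on is that the chosen Morava $E$-theory in Theorem \ref{thm:main-spectra} is allowed to be arbitrary thanks to Lemma \ref{lm:splitting-torsion-comodules} and the comodule-equivalence of Hovey--Strickland, so the statement here indeed applies to every height $n$ Morava $E$-theory.
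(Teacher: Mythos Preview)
Your proposal is correct and follows essentially the same approach as the paper: both proofs chain together the four equivalences from \cref{ex:local-duality-chromatic}, \cref{thm:main-spectra}, \cref{thm:pulling-out-torsion}, and \cref{const:periodic-derived-local-duality}, in that order, exactly as was done in \cref{cor:main-modules-dual}. There is no substantive difference between your argument and the paper's.
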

\begin{proof}
    As we did in \cref{cor:main-modules-dual}, we construct the equivalence from a sequence of equivalences coming from \cref{thm:local-duality} and \cref{thm:main-spectra}. More precisely we use equivalences coming from \cref{ex:local-duality-chromatic}, \cref{thm:main-spectra}, \cref{thm:pulling-out-torsion} and \cref{const:periodic-derived-local-duality}, which give
    \begin{align*}
        h_k \spK
        &\overset{\hspace{2pt}\ref{ex:local-duality-chromatic}\hspace{2pt}}
        \simeq 
        h_k \M\np \\
        &\overset{\ref{thm:main-spectra}}
        \simeq 
        h_k D^{per}(\comodt) \\
        &\overset{\ref{thm:pulling-out-torsion}}
        \simeq 
        h_k \Fr\np^{I_n-tors} \\
        &\overset{\ref{const:periodic-derived-local-duality}}
        \simeq 
        h_k \Fr\np^{I_n-comp},
    \end{align*}
    where we again have used that an equivalence of $\infty$-categories induces an equivalence on homotopy $k$-categories.
\end{proof}

\begin{remark}
    As in \cref{rm:monochromatic-corresponds-to-torsion} we can phrase \cref{thm:main-spectra-dual} as $h_k\sp\np^{I_n-comp}\simeq h_k\Fr\np^{I_n-comp}$. 
\end{remark}

%%%%%%%%%%%%%%%%%%%%%%%%%%%%%%%%%%%%%%%%%%%%%%%%%%%%%%%%%%%%%%%%%%%%%%%
%%%%%%%%%%%%%%%%%%%%%%%%%%%%%%%%%%%%%%%%%%%%%%%%%%%%%%%%%%%%%%%%%%%%%%%

\subsection*{Some remarks on future work}

The reason why \cref{thm:franke-algebraicity} works so well, is that there is a deformation of stable $\infty$-categories lurking behind the scenes. One does not need this in order to apply the theorem, but it is there regardless. In the case of a Morava $E$-theory $E=E_n$, the deformation associated with the adapted homology theory $E_*\colon \sp\np \longrightarrow \comod\EE$ is equivalent to the category of hypercomplete $E$-based synthetic spectra, $\hsynE$, introduced in \cite{pstragowski_2022}. Our restricted homology theory $E_*\colon \M_{n,p}\longrightarrow \comodt$ should then be associated to a deformation $\hsynE^{I_n-tors}$ coming from a local duality theory for $\hsynE$, in the sense that there is a diagram of stable $\infty$-categories 
\begin{center}
    \begin{tikzcd}
        \M_{n,p}\simeq\sp\np^{I_n-tors} & \hsynE^{I_n-tors} \arrow[l, "\tau^{-1}"'] \arrow[r, "\tau\sim 0"] & \Fr\np^{I_n-tors}.
    \end{tikzcd}    
\end{center}
Since $E_*$ is adapted on $\M\np$, we abstractly know that there is a deformation $D^\omega(\M\np)$ arising out of the work of Patchkoria-Pstr{\k a}gowski in \cite{patchkoria-pstragowski_2021}, called the perfect derived category. This should give an equivalent ``internal'' approach to $I_n$-torsion synthetic spectra, much akin to how we have equivalences $\M\np\simeq \sp\np^{I_n-tors}$ and $D(E_*E)^{I_n-tors}\simeq D(E_*E^{I_n-tors})$. 

In \cite{barkan_2023}, Barkan provides a monoidal version of \cref{thm:franke-algebraicity} by using filtered spectra. His deformation $\mathscr{E}\np$ is equivalent to $\hsynE$, which hints that there should be a monoidal version of \cref{thm:main-spectra} as well. We originally intended to incorporate such a result into this paper but decided against it in order to keep it free from deformation theory. We do, however, state the conjectured monoidal result, which we hope to pursue in future work.

\begin{conjecture}
    Let $p$ be a prime and $n$ a natural number. If $k$ is a positive natural number such that $2p-2>n^2+(k+3)n+k-1$, then we have an equivalence $h_k \M\np\simeq h_k \Fr\np^{I_n-tors}$ of symmetric monoidal stable $\infty$-categorires. 
\end{conjecture}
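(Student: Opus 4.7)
The plan is to leverage Barkan's monoidal refinement of Franke's algebraicity theorem from \cite{barkan_2023} by restricting it to the $I_n$-torsion ideals on both sides. Barkan provides a symmetric monoidal deformation $\mathscr{E}\np \simeq \hsynE$ whose generic fibre is $\sp\np$ and whose special fibre is $\Fr\np$, along with a symmetric monoidal $k$-equivalence $h_k \sp\np \simeq h_k \Fr\np$ in an appropriate range. The overall strategy is to promote this to a torsion-restricted statement and then combine it with the local duality technology of \cref{sec:exotic-algebraic-models}.

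First I would construct a symmetric monoidal local duality context on $\mathscr{E}\np$ using the image of a finite type $n$ spectrum $F(n)$ under the monoidal inclusion $\sp\np \hookrightarrow \mathscr{E}\np$. This produces a symmetric monoidal stable $\infty$-category $\mathscr{E}\np^{I_n-tors}$ whose torsion functor is smashing, hence symmetric monoidal by \cref{rm:monoidal-structure-in-local-duality}. The next step is to show that the generic and special fibre functors restrict to give symmetric monoidal equivalences between the generic fibre of $\mathscr{E}\np^{I_n-tors}$ and $\M\np$ (via \cref{ex:local-duality-chromatic}), and between the special fibre and $\Fr\np^{I_n-tors}$ (via the deformation analogue of \cref{thm:pulling-out-torsion}). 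Both identifications reduce to checking that $\Gamma_{I_n}$ commutes with the respective fibre functors, which should follow from a monoidal enhancement of \cref{thm:modular-bb-torsion} applied to the Barr--Beck adjunctions realizing the deformation.

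Once the deformation $\mathscr{E}\np^{I_n-tors}$ is identified on both fibres, I would apply Barkan's monoidal theorem directly to it. The bound $2p - 2 > n^2 + (k+3)n + k - 1$ should emerge by feeding into Barkan's monoidal criterion the cohomological dimension $n^2 + n$ of $\comodt$ from \cref{lm:cohomological-dimension-torsion-comodules} and the splitting of order $2p - 2$ from \cref{lm:splitting-torsion-comodules}, together with the fact that the restricted homology theory $E_*\colon \M\np \to \comodt$ is conservative and adapted by \cref{lm:conservative-adapted-torsion-comodules}. This would yield the conjectured symmetric monoidal $k$-equivalence $h_k \M\np \simeq h_k \Fr\np^{I_n-tors}$.

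The hard part will be establishing the compatibility of the torsion ideal with the deformation structure as \emph{symmetric monoidal} stable $\infty$-categories, i.e. that $\tau^{-1}(\mathscr{E}\np^{I_n-tors}) \simeq \M\np$ and $(\mathscr{E}\np^{I_n-tors})/\tau \simeq \Fr\np^{I_n-tors}$ monoidally. This is the monoidal deformation-theoretic analogue of \cref{thm:pulling-out-torsion} combined with \cref{thm:modular-bb-torsion}, and is the piece that presently keeps the statement conjectural. One would likely need to upgrade the Barr--Beck machinery of \cref{app:barr-beck} to the symmetric monoidal setting, verifying that the free $P\1$-module functor in the monoidal deformation interacts well with the smashing torsion colocalization. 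Once this is secured, restriction of Barkan's monoidal equivalence to the torsion ideals completes the argument.
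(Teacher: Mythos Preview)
The paper does not prove this statement; it is explicitly labeled a \emph{conjecture} and the surrounding text says the authors ``originally intended to incorporate such a result into this paper but decided against it in order to keep it free from deformation theory'' and ``hope to pursue [it] in future work.'' So there is no proof in the paper to compare your proposal against.

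That said, your outline is consistent with the hints the paper gives. The discussion preceding the conjecture sketches exactly the picture you describe: a local duality on $\hsynE$ (equivalently on Barkan's $\mathscr{E}\np$) producing a torsion deformation $\hsynE^{I_n-tors}$ with generic fibre $\M\np$ and special fibre $\Fr\np^{I_n-tors}$, to which one would then apply Barkan's monoidal algebraicity theorem. Your identification of the hard step---showing monoidally that the torsion colocalization commutes with passage to the generic and special fibres, i.e.\ a monoidal deformation-theoretic version of \cref{thm:pulling-out-torsion} and \cref{thm:modular-bb-torsion}---is precisely the missing ingredient the paper alludes to when it says the result would require deformation theory beyond the scope of the present work. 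You correctly flag this as the obstruction rather than claiming to have resolved it, so your proposal is an honest strategy sketch rather than a proof, which matches the status of the statement in the paper.
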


As \cref{thm:local-duality} is monoidal, this would give a similar statement for the $K_p(n)$-local category, i.e. a symmetric monoidal equivalence $h_k \spK\simeq h_k \Fr\np^{I_n-comp}.$

Since $E$-based synthetic spectra are categorifications of the $E$-Adams spectral sequence, one should expect the above-mentioned local duality for $\hsynE$ to give a category $\hsynE^{I_n-comp}$, which categorifies the $K_p(n)$-local $E$-Adams spectral sequence. We plan to study such categorifications of the $K_p(n)$-local $E$-Adams spectral sequence in future work joint with Marius Nielsen. 
\appendix

\section{Barr-Beck for localizing ideals}
\label{app:barr-beck}

In this appendix we prove that the monoidal Barr-Beck theorem---a monoidal version of Lurie's $\infty$-categorical version of the classical Barr-Beck monadicity theorem, see \cite[Section 4.7]{Lurie_HA}---interacts nicely with local duality.

\begin{theorem}[{\cite[5.29]{mathew-naumann-noel_2017}}]
    \label{thm:modular-bb}
    Let $\C, \D \in \Alg(\Pr)$ and $(F\dashv G)\colon \C\longrightarrow \D$ be a monoidal adjunction. If in addition 
    \begin{enumerate}
        \item $G$ is conservative, 
        \item $G$ preserves colimits, and
        \item the projection formula holds,
    \end{enumerate}
    then $(F,G)$ is a monoidally monadic adjunction and the monad $GF$ is equivalent to the monad $G(\1_\D)\otimes (-)$. In particular this gives a symmetric monoidal equivalence $\D\simeq \Mod_{G(\1_\D)}(\C).$
\end{theorem}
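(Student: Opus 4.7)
The plan is to combine Lurie's $\infty$-categorical Barr--Beck monadicity theorem with the projection formula, and then to promote the resulting monadic equivalence to a symmetric monoidal one. The three hypotheses of the theorem correspond directly to the three steps of this argument.

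First I would apply the monadicity theorem \cite[4.7.3.5]{Lurie_HA} to the adjunction $F \dashv G$. The hypotheses there require $G$ to be conservative and to preserve $G$-split geometric realizations; both follow immediately from assumptions (i) and (ii), since preserving all colimits certainly implies preserving $G$-split simplicial colimits. This yields an equivalence of underlying $\infty$-categories
\[ \D \simeq \Mod_{GF}(\C),\]
where $GF$ is viewed as a monad on $\C$ in the ordinary sense.

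Second, I would identify the underlying endofunctor of the monad $GF$ with $G(\1_\D) \otimes (-)$ using the projection formula. The projection formula provides a natural equivalence $c \otimes G(d) \xrightarrow{\simeq} G(F(c) \otimes d)$, which is the mate of the strong monoidality of $F$. Taking $d = \1_\D$ yields a natural equivalence $GF(c) \simeq c \otimes G(\1_\D)$. To lift this to an equivalence of monads, one needs the multiplication $GF \circ GF \Rightarrow GF$ induced by the counit to agree, under this identification, with the multiplication coming from the lax monoidal structure map $G(\1_\D) \otimes G(\1_\D) \to G(\1_\D \otimes \1_\D) \simeq G(\1_\D)$. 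This compatibility is a diagram chase using naturality of the projection formula and the unit axiom.

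Third, and most delicately, I would upgrade the equivalence to a symmetric monoidal one. Since $G$ is lax symmetric monoidal, the object $G(\1_\D)$ is a commutative algebra in $\C$, and the ordinary image functor refines canonically to a lax symmetric monoidal functor $\bar G \colon \D \to \Mod_{G(\1_\D)}(\C)$. The equivalence from the first two steps is precisely $\bar G$; to verify it is in fact strong monoidal one checks that the comparison map $G(d) \otimes_{G(\1_\D)} G(d') \to G(d \otimes d')$ is an equivalence, again via the projection formula combined with the fact that $G$ preserves the bar-complex geometric realization defining the relative tensor product. The main obstacle is precisely this final step: both source and target are presentably symmetric monoidal, but the relative tensor product on the right is constructed as a two-sided bar resolution, so one genuinely needs the colimit preservation of $G$ together with the projection formula (rather than either alone) to compare the two symmetric monoidal structures. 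This is the part that Mathew--Naumann--Noel formalize and where the interaction of all three hypotheses is essential.
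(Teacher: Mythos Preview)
Your proposal is correct and follows essentially the same approach as the paper: apply Lurie's Barr--Beck monadicity theorem using hypotheses (1) and (2), then identify the monad $GF$ with $G(\1_\D)\otimes(-)$ via the projection formula evaluated at $\1_\D$. The paper's own proof is a two-sentence sketch citing \cite[4.7.0.3]{Lurie_HA} for monadicity and invoking the projection formula at the unit; you supply considerably more detail, in particular on the symmetric monoidal upgrade, which the paper leaves implicit (the result being quoted from \cite{mathew-naumann-noel_2017}).
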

\begin{proof}
    By \cite[4.7.0.3]{Lurie_HA} the adjunction is monadic by the first two criteria, giving an equivalence $\D\simeq \Mod_{GF}(\C)$. The projection formula applied to the unit $\1_\D$ gives an equivalence of monads $GF\simeq G(\1_\D)\otimes \C$. 
\end{proof}

\begin{definition}
    When the three criteria above hold for a given monoidal adjunction $(F\dashv G)$, we will say that the adjunction satisfies the monoidal Barr-Beck criteria or that it is a {\defn monoidal Barr-Beck adjunction}. We will sometimes omit the prefix monoidal when it is clear from context. 
\end{definition}

Let $(\C, \K)$ be a local duality context. We wish to prove that the associated local duality diagram is compatible with \cref{thm:modular-bb}. By modifying \cite[3.7]{behrens-shaw_2020} slightly, we know that any Barr-Beck adjunction induces a Barr-Beck adjunction on $\K$-local and $\K$-complete objects. Hence, it remains only to prove a similar statement for the $\K$-torsion objects. 

\begin{definition}
    Let $(\C, \K)$ and $(\D, \L)$ be local duality contexts. A {\defn map of local duality contexts} is a symmetric monoidal colimit-preserving functor $F\colon \C\longrightarrow \D$ such that $F(\K)\subseteq \L$. If, in addition $\Loc_\D^\otimes(F(\K))\simeq \Loc_\D^\otimes(\L)$, then we say $F$ is a {\defn strict} map of local duality contexts. A monoidal adjunction $(F\dashv G)\colon \C\longrightarrow \D$ such that $F$ is a strict map of local duality contexts is called a {\defn local duality adjunction}, sometimes denoted 
    $$(F\dashv G)\colon (\C, \K)\longrightarrow (\D, \L).$$
\end{definition}

Given a local duality context and an appropriate functor, one can always extend the functor to a strict map of local duality context in the following way. 

\begin{construction}
    Let $(\C, \K)$ be a local duality context, $\D \in \Alg(\Pr)$ and $F\colon \C\longrightarrow \D$ be a symmetric monoidal colimit-preserving functor. The image of $\K$ under $F$ generates a localizing ideal $\Loc^\otimes_\D(F(\K))$ in $\D$, which makes $F$ a map of local duality contexts. We call this the local duality context on $\D$ induced by $\C$ via $F$. 
\end{construction}

The following lemma is essentially the ``non-geometric'' version of \cite[5.11]{balmer-sanders_2017}. The proof is also similar, but as we have phrased it in a different and slightly more general language, we present a full proof. 

\begin{lemma}
    \label{lm:induced-torsion-adjunction}
    Let $(F\dashv G)\colon (\C, \K)\longrightarrow (\D, \L)$ be a local duality adjunction. Then, the adjunction induces a monoidal adjunction on localizing ideals
    \begin{center}
    \begin{tikzcd}
        \Loc_\C^\otimes(\K) \arrow[rr, "F'", yshift=2] &  & \Loc_\D^\otimes(\L) \arrow[ll, "G'", yshift=-2].
    \end{tikzcd}
    \end{center}
\end{lemma}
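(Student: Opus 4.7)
The plan is to define $F'$ as the restriction of $F$ to $\C^{\K-tors}$ and $G'$ as the composite $\Gamma_\K^\C \circ G \circ i_{\L-tors}^\D$, then verify that these assemble into a monoidal adjunction. First I check that $F'$ actually lands in $\D^{\L-tors}$: since $F$ preserves colimits and is symmetric monoidal, the image of the tensor-localizing ideal $\C^{\K-tors}=\Loc_\C^\otimes(\K)$ is contained in $\Loc_\D^\otimes(F(\K))$, and by the strictness hypothesis this coincides with $\Loc_\D^\otimes(\L) = \D^{\L-tors}$, so $F'$ is well-defined. For the adjunction $F'\dashv G'$, I argue directly: composing the adjunction $F\dashv G$ with full faithfulness of $i_{\L-tors}^\D$ and the adjunction $i_{\K-tors}^\C\dashv \Gamma_\K^\C$ (which applies because $X$ is already $\K$-torsion) gives
$$\Hom_{\D^{\L-tors}}(F'X,Y)\simeq \Hom_\D(FX,Y)\simeq \Hom_\C(X,GY)\simeq \Hom_{\C^{\K-tors}}(X,\Gamma_\K^\C GY)$$
for $X\in \C^{\K-tors}$ and $Y\in \D^{\L-tors}$.

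For monoidality, \cref{thm:local-duality} together with \cref{rm:monoidal-structure-in-local-duality} tells us that the symmetric monoidal structures on $\C^{\K-tors}$ and $\D^{\L-tors}$ are inherited from $\C$ and $\D$, with units $\Gamma_\K^\C \1_\C$ and $\Gamma_\L^\D \1_\D$ respectively. Preservation of the tensor product by $F'$ is then immediate from the symmetric monoidality of $F$. The main obstacle, and the only step where the strictness hypothesis enters essentially, is the identification $F(\Gamma_\K^\C \1_\C)\simeq \Gamma_\L^\D \1_\D$.

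To handle this, I show that tensoring with $F(\Gamma_\K^\C \1_\C)$ acts as the identity on all of $\D^{\L-tors}$, so that $F(\Gamma_\K^\C \1_\C)$ is forced to be the tensor unit of that category. On an object of the form $F(k)\otimes Z$ with $k\in \K$ and $Z\in \D$, monoidality of $F$ combined with the fact that $\Gamma_\K^\C \1_\C \otimes k \simeq k$ for torsion $k$ (using that $\Gamma_\K^\C$ is smashing) gives
$$F(\Gamma_\K^\C \1_\C)\otimes F(k)\otimes Z\simeq F(\Gamma_\K^\C \1_\C\otimes k)\otimes Z\simeq F(k)\otimes Z.$$
Since $\D^{\L-tors}=\Loc_\D^\otimes(F(\K))$ by strictness, and $F(\Gamma_\K^\C \1_\C)\otimes(-)$ preserves colimits, this identity propagates to all of $\D^{\L-tors}$. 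This identifies $F(\Gamma_\K^\C \1_\C)$ with $\Gamma_\L^\D \1_\D$ and completes the verification that $F'$ is symmetric monoidal.
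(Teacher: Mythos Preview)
Your proof is correct and follows essentially the same route as the paper: define $F'$ by restriction and $G'$ as $\Gamma_\K \circ G \circ i_{\L-tors}$, observe that tensor-product preservation is automatic, and reduce the monoidality to the unit identification $F(\Gamma_\K \1_\C)\simeq \Gamma_\L \1_\D$, which both you and the paper establish by showing $F(\Gamma_\K \1_\C)$ acts as the identity on the localizing tensor ideal generated by $F(\K)$ and invoking strictness. Your hom-set verification of the adjunction is a bit more explicit than the paper's, and your unit argument checks the generators $F(k)\otimes Z$ directly rather than packaging them into an auxiliary subcategory $\mathcal{G}$, but these are cosmetic differences in the same argument.
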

\begin{proof}
    From \cref{rm:monoidal-structure-in-local-duality} we know that the symmetric monoidal structures on $\Loc^\otimes_\C(\K)$ and $\Loc^\otimes_\D(\L)$ is simply the symmetric monoidal structures on $\C$ and $\D$, restricted to the full subcategories. 
    
    Since $F$ is a map of local duality contexts, we have an inclusion $F(\K)\subseteq \L$, which gives inclusions  
    $$F(\Loc^\otimes_{\C}(\K))\subseteq \Loc^\otimes_\D(F(\K))\subseteq \Loc^\otimes_\D(\L),$$
    meaning that the functor $F$ restricts to the torsion objects. In particular we have for any object $X\in \C^{\K-tors}$ an equivalence $\Gamma_\L F(X) \simeq F(X)$. We let $F'=F_{|\Loc^\otimes_\C(\K)}$ and define $U'$ to be the composition 
    $$Loc^{\otimes}_\D(\L)\overset{i_{\L-tors}}\longrightarrow \D\overset{U}\longrightarrow \C\overset{\Gamma_\K}\longrightarrow \Loc_\C^{\otimes}(\K),$$
    which is an adjoint to $F'$. We need to show that $F$ is a symmetric monoidal functor, but, as the inclusions $i_{\K-loc}$ and $i_{\L-loc}$ are non-unitally monoidal all that remains to be proven is that $F'$ sends the monoidal unit $\Gamma_\K \1_\C$ to the monoidal unit $\Gamma_\L \1_\D$. 

    The localizing ideals $\Loc_\C^\otimes(\K)$ and $\Loc_\D^\otimes(\L)$ are equivalent to the localizing ideals generated by the respective units, i.e. 
    $$\Loc_\C^\otimes(\K)\simeq \Loc_\C^\otimes(\Gamma_\K\1_\C) \quad \text{and}\quad \Loc_\D^\otimes(\L)\simeq \Loc_\D^\otimes(\Gamma_\L\1_\D).$$
    Since $(F\dashv U)$ is a local duality adjunction we also know that $\Loc^\otimes_\D(F(\K))\simeq \Loc_\D^\otimes(\L)$, which also means $\Loc_\D(F(\Gamma_\K \1_\C)) \simeq \Loc_\D^\otimes(\L)$.
    Let $\mathcal{G}$ be the full subcategory of $\Loc_\D^\otimes(\L)$ where $F(\Gamma_\K\1_\C)$ acts as a unit, in other words objects $M\in \Loc_\D^\otimes(\L)$ such that $F(\Gamma_\K\1_\C)\otimes_{\D} M\simeq M$. In particular, $F(\Gamma_\K\1_\C)$ is in $\mathcal{G}$. The category $\mathcal{G}$ is closed under retracts, suspension, and colimits, as well as tensoring with objects in $\D$, as we have 
    $$F(\Gamma_\K\1_\C) \otimes_\D (M\otimes_\D D)\simeq (F(\Gamma_\K\1_\C)\otimes_\D M) \otimes_\D D \simeq M\otimes_\D D$$
    for any $M\in \mathcal{G}$ and $D\in \D$. Hence, it is a localizing tensor ideal of $\D$, with a symmetric monoidal structure where the unit is $F(\Gamma_\K\1_\C)$. In particular, $\mathcal{G}\simeq \Loc_\D^\otimes(F(\Gamma_\K\1_\C))$, which we already know is equivalent to $\Loc_\D^\otimes(\L).$
    
    Since the ideals are equivalent, and the unit is unique, we must have $F(\Gamma_\K\1_\C)\simeq \Gamma_\L \1_D$, which finishes the proof. 
\end{proof}

The key feature for us is that such an induced adjunction inherits the property of being a Barr-Beck adjunction, i.e., that the right adjoint is conservative, preserves colimits, and has a projection formula. An analogous, but not equivalent, statement was proven in \cite[4.5]{behrens-shaw_2020}. Another related, but not equivalent statement, is Greenlees and Shipleys Cellularization principle, see \cite{greenlees-shipley_2013}. 

\begin{theorem}
    \label{thm:modular-bb-torsion}
    Let $(F\dashv G)\colon (\C, \K)\longrightarrow (\D, \L)$ be a local duality adjunction. If $(F\dashv G)$ satisfies the Barr-Beck criteria, then the induced monoidal adjunction on localizing ideals
    \begin{center}
        \begin{tikzcd}
            \Loc_\C^\otimes(\K) \arrow[rr, "F'", yshift=2] &  & \Loc_\D^\otimes(\L) \arrow[ll, "G'", yshift=-2]
        \end{tikzcd}
    \end{center}
    constructed in \cref{lm:induced-torsion-adjunction}, also satisfies the Barr-Beck criteria. 
\end{theorem}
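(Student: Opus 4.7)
The plan is to verify each of the three monoidal Barr-Beck criteria for $(F' \dashv G')$ by reducing them to the corresponding properties of $(F \dashv G)$. The key intermediate observation I will establish is that the right adjoint $G$, when restricted to $\L$-torsion objects, automatically lands in $\K$-torsion objects. Granting this, $G' = \Gamma_\K \circ G \circ i_{\L-tors}$ collapses to just $G \circ i_{\L-tors}$, since the $\Gamma_\K$ is redundant on $\K$-torsion objects, and each criterion becomes essentially formal.

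To prove the key observation, I would consider the full subcategory $\mathcal{H} \subseteq \D$ consisting of objects $X$ for which $G(X) \in \Loc^\otimes_\C(\K)$. Since $G$ preserves colimits by assumption and $\K$-torsion is closed under colimits, $\mathcal{H}$ is closed under colimits. For any $T \in \K$ and $D \in \D$, combining the projection formula with the identification $GF \simeq G(\1_\D) \otimes_\C (-)$ from \cref{thm:modular-bb} gives
$$G(F(T) \otimes_\D D) \simeq GF(T \otimes_\C G(D)) \simeq G(\1_\D) \otimes_\C T \otimes_\C G(D),$$
which lies in the localizing tensor ideal generated by $\K$ because it has $T \in \K$ as a tensor factor. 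Hence $\mathcal{H}$ contains the localizing tensor ideal $\Loc^\otimes_\D(F(\K))$, which by strictness of the local duality adjunction coincides with $\Loc^\otimes_\D(\L)$.

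With the key observation established, conservativity of $G'$ is immediate from conservativity of $G$, and colimit preservation of $G'$ follows from colimit preservation of $G$ together with the fact that colimits in localizing subcategories are computed in the ambient category. For the projection formula, the essential input is \cref{rm:monoidal-structure-in-local-duality}, which says the monoidal structures on both $\Loc^\otimes_\C(\K)$ and $\Loc^\otimes_\D(\L)$ are simply inherited from the ambient categories; consequently, the identity $F'(C \otimes G'(D)) \simeq F(C) \otimes D$ literally unfolds to the projection formula for $(F, G)$ applied to $C$ and $D$. The main obstacle is the key observation that $G$ preserves torsion, and the crucial ingredient there is the strictness hypothesis on $F$, without which one could not reduce $\L$-torsion objects to colimits of objects of the form $F(T) \otimes D$ with $T \in \K$.
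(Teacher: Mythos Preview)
Your proposal is correct and reaches the same conclusion as the paper, namely that $G'$ is simply the restriction of $G$ to $\L$-torsion objects, after which the three Barr--Beck criteria are immediate. However, the route you take to establish this key observation is genuinely different from the paper's.

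The paper argues directly at the level of the torsion idempotent: for $X \in \Loc^\otimes_\D(\L)$ one computes
\[
G'(X) = \Gamma_\K G(X) \simeq G(X)\otimes_\C \Gamma_\K \1_\C \simeq G\bigl(X \otimes_\D F(\Gamma_\K \1_\C)\bigr) \simeq G(X \otimes_\D \Gamma_\L \1_\D) \simeq G(X),
\]
using that $\Gamma_\K$ is smashing, the projection formula for $(F\dashv G)$, and the identification $F(\Gamma_\K \1_\C)\simeq \Gamma_\L \1_\D$ already established in \cref{lm:induced-torsion-adjunction}. Your argument instead proceeds by a generation argument: you define $\mathcal{H}=\{X\in\D : G(X)\in \Loc^\otimes_\C(\K)\}$, show it is a localizing subcategory, and verify via the projection formula that it contains all objects of the form $F(T)\otimes_\D D$ with $T\in\K$, hence contains $\Loc^\otimes_\D(F(\K)) = \Loc^\otimes_\D(\L)$ by strictness. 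Both approaches are valid; the paper's is shorter and exploits the unit identification from the preceding lemma, while yours is more self-contained in that it does not invoke that identification and makes explicit where the strictness hypothesis enters. One minor remark: your displayed chain $G(F(T)\otimes_\D D)\simeq GF(T\otimes_\C G(D))\simeq G(\1_\D)\otimes_\C T\otimes_\C G(D)$ is correct but slightly roundabout---the projection formula already gives $G(F(T)\otimes_\D D)\simeq T\otimes_\C G(D)$ in one step, which visibly lies in the tensor ideal generated by $\K$.
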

\begin{proof}
    We need to prove that $G'$ is conservative and colimit-preserving and that the projection formula holds. The first two will both follow from the following computation, showing that also $G'$ is just the restriction of $G$ to $\Loc_\D^\otimes(\L)$. 

    Let $X\in \Loc_\D^\otimes(\L)$. By definition we have $G'(X) = \Gamma_\K G(X)$, where we have omitted the inclusions from the notation for simplicity. Since $\Gamma_\K$ is smashing and $(F\dashv G)$ by assumption has a projection formula we have 
    $$\Gamma_\K G(X)\simeq G(X)\otimes_\C \Gamma_\K\1_\C \simeq G(X\otimes_\D F(\Gamma_\K\1_\C)).$$
    By \cref{lm:induced-torsion-adjunction} $F'$ is symmetric monoidal, hence $F(\Gamma_\K\1_\C)\simeq \Gamma_\L \1_\D$, which acts on $X$ as the monoidal unit. Thus, we can summarize with
    $$G'(X)\simeq G(X\otimes_\D F(\Gamma_\K\1_\C))\simeq G(X\otimes_\D \Gamma_\L\1_\D)\simeq G(X),$$
    which shows that also $G'$ is the restriction of $G$. 

    Now, as $U$ is both conservative and preserves colimits, and colimits in the localizing ideals are computed in $\C$ and $\D$ respectively, then also $U'$ is conservative and colimit-preserving. The projection formula for $(F'\dashv U')$ also automatically follows from the projection formula for $(F\dashv U)$.  
\end{proof}

%\bibliographystyle{alpha}
%\bibliography{references}\newpage
\printbibliography{}

\textbf{Torgeir Aamb\o:} Department of Mathematical Sciences, Norwegian University of Science and Technology, Trondheim \\
\textbf{Email address:} torgeir.aambo@ntnu.no \\
\textbf{Website:} \href{https://folk.ntnu.no/torgeaam/ }{https://folk.ntnu.no/torgeaam/}

\end{document}